\NewDocumentCommand{\tens}{t_}
{%
	\IfBooleanTF{#1}
	{\tensop}
	{\otimes}%
}
\NewDocumentCommand{\tensop}{m}
{%
	\mathbin{\mathop{\otimes}\displaylimits_{#1}}%
}
\newcommand{\prtt}[1]{\left( #1 \right)}
\newcommand{\tA}[2]{#1 \tens_{R} #2}
\newcommand{\lips}[2]{\tA{#1}{#2}-\tA{#2}{#1}}
\newcommand{\cL}{{\mathcal L}}
\newcommand{\bC}{{\mathbb{C}}}
\newcommand{\bR}{{\mathbb{R}}}
\newcommand{\bN}{{\mathbb{N}}}
\newcommand{\bZ}{{\mathbb{Z}}}
\newcommand{\sub}{\subseteq}
\newcommand{\ten}{\otimes}
\newcommand{\fp}{\mathfrak{p}}
\newcommand{\bx}{\mathbf{x}}
\newcommand{\ord}{\mbox{ord}}
\newcommand{\spec}{\mbox{Spec}}
\newcommand{\car}{\mbox{char}}
\newcommand{\ideal}[1]{\left\langle #1 \right\rangle}
\newcommand\quotient[2]{{^{\displaystyle #1}}/{_{\displaystyle #2}}}
\theoremstyle{plain}
\newtheorem{definition}{Definition}[section]
\numberwithin{definition}{section}
\newtheorem{proposition}[definition]{Proposition}
\newtheorem{theorem}[definition]{Theorem}
\newtheorem{corollary}[definition]{Corollary}
\newtheorem{lemma}[definition]{Lemma}
\newtheorem{observation}[definition]{Observation}
\newtheorem{example}[definition]{Example}
\newtheorem{remark}[definition]{Remark}
\def\moverlay{\mathpalette\mov@rlay}
\def\mov@rlay#1#2{\leavevmode\vtop{%
		\baselineskip\z@skip \lineskiplimit-\maxdimen
		\ialign{\hfil$\m@th#1##$\hfil\cr#2\crcr}}}
\newcommand{\charfusion}[3][\mathord]{
	#1{\ifx#1\mathop\vphantom{#2}\fi
		\mathpalette\mov@rlay{#2\cr#3}
	}
	\ifx#1\mathop\expandafter\displaylimits\fi}
\begin{document}
	
	\title{Monomial algebraic and algebroid curves and their Lipschitz saturation over the ground ring}
	\author{Anne Frühbis-Krüger and Thiago da Silva}
	\date{}
	
	\maketitle
	
	\begin{abstract}
		In this article, we provide an explicit description of the Lipschitz saturation $A^*_{B,R}$ of a subalgebra $A\sub R[[t^{\gamma_1},\hdots, t^{\gamma_n}]]$ over a ring $R$ in terms of the numerical semigroup $\Gamma=\ideal{\gamma_1,\hdots,\gamma_n}$ with mild conditions imposed on $R$.
	\end{abstract}

	\tableofcontents

	\section*{Introduction}

	The notion of Lipschitz saturation of algebras was developed by Pham and Teissier in the late 1960s in \cite{PT, P1}. In the case of complex analytic algebras, they observed that the germs of Lipschitz meromorphic functions lie between $A$ and $\overline{A}$, showing that it coincides with the Zariski saturation \cite{Z} in the hypersurface case. Namely, for a reduced complex analytic algebra $A$ with normalization $\overline{A}$, Pham and Teissier defined the Lipschitz saturation of $A$ as $$A^*:=\{f\in\overline{A}\mid f\ten_{\bC}1-1\ten_{\bC}f\in\overline{I_A}\},$$
	
	\noindent where $I_A$ denotes the kernel of the canonical map $\overline{A}\ten_{\bC}\overline{A}\rightarrow \overline{A}\ten_A\overline{A}$. 
	
	More recently, Gaffney used this machinery in \cite{gaffney1} to deal with bi-Lipschitz equisingularity of families of curves and showed that Lipschitz saturation is related to the integral closure of the \textit{double} of the ideal, a concept that he defined in the same article. This was used in  \cite{gaffney2} to get an infinitesimal Lipschitz condition for a family of complex analytic hypersurfaces. In \cite{GS, GS2, Silva1}, the construction of the double of ideals was then generalized to sheaves of modules to obtain an infinitesimal condition for codimensions greater than $1$, obtaining a necessary relationship with Fernandes and Ruas' strongly Lipschitz trivial condition \cite{FR2, FR}. In \cite{SGP, SGP2}, the integral closure of ideals and the double are used to describe the bi-Lipschitz equisingularity of families of matrices.
	
	In \cite{L}, Lipman extended the definition of Lipschitz saturation for a sequence of ring morphisms $R\rightarrow A\rightarrow B$ and defined what he called the \textit{relative Lipschitz saturation of $A$ in $B$}, denoted by $A^*_{B, R}$. Besides, in \cite{SR}, the authors extended the results of Lipman concerning the property of contraction for the integral and universally injective cases, and in \cite{bernard}, the author studies the relative Lipschitz saturation of complex algebraic varieties.
	
	Although the concept of Lipschitz saturation of algebras has been explored since the 1970s, effectively calculating the Lipschitz saturation of a given algebra remains a significant challenge. Suppose that $A=\bC[C]$ is the coordinate ring of a monomial irreducible complex analytic curve $C \subseteq \mathbb{C}^n$ in its normalization algebra $B=\overline{\bC[C]}=\bC\{t\}$, where the normalization map is induced by an analytic map $\phi:\bC\rightarrow C$ on the form $$\phi(t)=(t^{\gamma_1}, t^{\gamma_2}\cdots,\hdots, t^{\gamma_n}).$$
	
	For giving an explicit description of the Lipschitz saturation $A^*_{B,\bC}$, the semigroup $\Gamma=\langle\gamma_1, \hdots,\gamma_n\rangle$ of $A=\bC[C]=\bC\{t^{\gamma_1}, \hdots, t^{\gamma_n}\}$ will play a fundamental role.
	
	With this motivation in mind, in this work, we will describe the Lipschitz saturation of a $R$-subalgebra $A$ of $B$ in a more general scenario. More precisely, we work with the following setup:
	
	\begin{itemize}
		\item $R$ is a ring with some additional properties (see Theorem \ref{202407191955});
		
		\item $B$ is an $R$-subalgebra of the power series ring $R[[t]]$ containing the polynomial ring $R[t]$;
		
		\item $A$ is an $R$-subalgebra of $B\cap R[[t^{\gamma_1},\hdots,t^{\gamma_n}]]$ containing $R[t^{\gamma_1},\hdots,t^{\gamma_n}]$.
		
	\end{itemize}
	
	In Section 1, we recall some basic definitions and results about the relative Lipschitz saturation of algebras obtained by Lipman in \cite{L}. In Section 2, we develop some basic results on the Lipschitz saturation of $R[t^{\gamma_1},\hdots, t^{\gamma_n}]$ in $R[t]$, where some of the suitable properties for the ring $R$ will arise. Thereafter, we prove Theorem \ref{202407191955} in Section 3, which is the main result of this work, describing $A^*_{B,R}$ completely in terms of its semigroup. In the appendix, we present explicit applications of Theorem \ref{202407191955} in the particular case where $R$ is a domain.

	\section{Background and some results on relative Lipschitz saturation of algebras}
	
	In this section, we will recall the definition of relative Lipschitz saturation given by Lipman in \cite{L} as well as some results that will be necessary throughout this work.
	
	Let $R$ be a ring\footnote{In this work, all rings are commutative and unitary.}, let $A, B$ be $R$-algebras and consider a composition of algebra morphisms 
	\begin{align*}
		R \overset{\tau}{\longrightarrow} A \overset{g}{\longrightarrow} B.
	\end{align*} Consider the map from $B$ to its tensor product by $R$, in a diagonal way:
	\begin{align*}
		\Delta : B &\longrightarrow \tA{B}{B} \\
		b &\longmapsto \lips{b}{1}.
	\end{align*}
	
	It is easy to conclude the following properties of $\Delta$:
	\begin{enumerate}
		\item $\Delta(b_1+b_2)=\Delta(b_1+b_2), \forall b_1,b_2\in B$;
		
		\item $\Delta(rb)=r\Delta(b), \forall r\in R$ and $b\in B$;
		
		\item (Leibniz rule) $\Delta(b_1b_2)=(b_1\ten_R 1)\Delta(b_2)+(1\ten_Rb_2)\Delta(b_1), \forall b_1,b_2\in B$. 
	\end{enumerate}
	In particular, $\Delta$ is an $R$-module morphism, but not an $R$-algebra morphism.
	
	By the universal property of the tensor product, there exists a unique $R$-algebra morphism
	$$\varphi: \tA{B}{B} \longrightarrow B\tens_{A}B$$
	which maps $x\otimes_R y\mapsto x\otimes_A y$, for all $x,y\in B$. Furthermore, we already know (see \cite{kleiman}, 8.7, for the first equality) that
	\begin{eqnarray*}
		\ker\varphi & = &
		\ideal{ \{g(a)x\ten_R y-x\ten_Rg(a)y\mid a\in A\mbox{ and }x,y\in B\}}\\
		& = &
		\ideal{\{ g(a)\ten_R1-1\ten_Rg(a)\mid a\in A\}}\\
		& = & \Delta(g(A))(B\ten_RB).
	\end{eqnarray*}
	i.e, $\ker\varphi$ is the ideal of $B\ten_RB$ generated by the image of $\Delta\circ g$.
	
	Motivated by the works of Pham and Teissier \cite{PT,P1}, Lipman introduced the following definition in \cite{L}.
	
	\begin{definition}[\cite{L}]
		The Lipschitz saturation of $A$ in $B$ relative to $R \overset{\tau}{\rightarrow} A \overset{g}{\rightarrow} B$ is the set
		\begin{align*}
			A^*_{B,R} := \left\{x \in B \mid \Delta(x) \in \overline{\ker \varphi}\right\}.
		\end{align*}
	\end{definition}
	If $A^*_{B,R} = g(A)$ then $A$ is said to be Lipschitz saturated in $B$. An important particular case is when $A$ is an $R$-subalgebra of $B$, taking $g$ as the inclusion. For this case, we recall some properties proved by Lipman in \cite{L}, which in summary shows that the relative Lipschitz saturation in this setting does not show any unexpected behaviour.
	
	\begin{proposition}[\cite{L}]\label{prop_satsubanel}
		$A^*_{B,R}$ is an $R$-subalgebra of $B$ which contains $g(A)$.
	\end{proposition}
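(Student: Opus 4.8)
The plan is to check the three properties that make $A^*_{B,R}$ an $R$-subalgebra of $B$ containing $g(A)$ — namely that it contains $g(A)$ (and hence $1$), that it is closed under $R$-linear combinations, and that it is closed under multiplication — using only the formal properties of the map $\Delta$ recorded above, together with the single standard fact that the integral closure of an ideal in a commutative ring is again an ideal.

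First I would note that $\overline{\ker\varphi}$ is an ideal of $B\otimes_R B$ containing $\ker\varphi$. Since $\ker\varphi = \Delta(g(A))(B\otimes_R B)$, for every $a\in A$ the element $\Delta(g(a)) = g(a)\otimes_R 1 - 1\otimes_R g(a)$ lies in $\ker\varphi\subseteq\overline{\ker\varphi}$, so $g(a)\in A^*_{B,R}$; thus $g(A)\subseteq A^*_{B,R}$, and in particular $1=g(1)\in A^*_{B,R}$. Next, for $x,y\in A^*_{B,R}$ and $r\in R$, the fact that $\Delta$ is an $R$-module morphism gives $\Delta(rx+y)=r\,\Delta(x)+\Delta(y)$, which lies in $\overline{\ker\varphi}$ because $\Delta(x),\Delta(y)\in\overline{\ker\varphi}$ and an ideal is in particular an $R$-submodule. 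Hence $rx+y\in A^*_{B,R}$, so $A^*_{B,R}$ is an $R$-submodule of $B$.

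For closure under multiplication I would invoke the Leibniz rule: for $x,y\in A^*_{B,R}$,
$$\Delta(xy)=(x\otimes_R 1)\,\Delta(y)+(1\otimes_R y)\,\Delta(x).$$
Both $\Delta(x)$ and $\Delta(y)$ lie in the ideal $\overline{\ker\varphi}$, so multiplying by the elements $1\otimes_R y$ and $x\otimes_R 1$ of $B\otimes_R B$ keeps them inside $\overline{\ker\varphi}$, and therefore their sum $\Delta(xy)$ lies in $\overline{\ker\varphi}$, i.e. $xy\in A^*_{B,R}$. Putting the three steps together yields the statement. The only input beyond routine bookkeeping is the assertion that $\overline{\ker\varphi}$ is an ideal; everything else is forced by the module and Leibniz properties of $\Delta$, so I do not expect a genuine obstacle — the content is simply that passing to the integral closure of $\ker\varphi$ is compatible with the ``derivation-like'' behaviour of $\Delta$.
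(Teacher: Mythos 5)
Your proof is correct, and since the paper simply cites Lipman for this statement without reproducing a proof, your argument is the natural one to supply: it uses exactly the three formal properties of $\Delta$ (membership of $\ker\varphi\subseteq\overline{\ker\varphi}$ for $g(A)$, $R$-linearity, and the Leibniz rule) together with the standard fact that the integral closure of an ideal is again an ideal. This is essentially Lipman's own argument, so no discrepancy to report.
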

	
	\begin{proposition}[\cite{L}]\label{202309051604}
		Let $A$ be an $R$-subalgebra of $B$ and let $C$ be an $R$-subalgebra of $B$ containing $A$ as an $R$-subalgebra. \[\begin{tikzcd}
			R & A & C & B
			\arrow["\tau", from=1-1, to=1-2]
			\arrow[hook, from=1-2, to=1-3]
			\arrow[hook, from=1-3, to=1-4]
			\arrow["g"', curve={height=12pt}, hook, from=1-2, to=1-4]
		\end{tikzcd}\]

		Then $A^*_{B,R} \subseteq C^*_{B.R}$.
	\end{proposition}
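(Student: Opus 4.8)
The plan is to reduce the statement to the monotonicity of the integral closure of ideals inside the single ring $\tA{B}{B}$, which depends only on $R$ and $B$ and not on the intermediate algebra, so that the three towers produce ideals that can be compared directly.

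First I would fix notation, writing $\varphi_A\colon\tA{B}{B}\to B\otimes_{A}B$ and $\varphi_C\colon\tA{B}{B}\to B\otimes_{C}B$ for the canonical $R$-algebra morphisms attached to the towers $R\to A\hookrightarrow B$ and $R\to C\hookrightarrow B$, and keeping $\Delta\colon B\to\tA{B}{B}$ for the diagonal $R$-module map, which is common to both situations. Since $A$ and $C$ are $R$-subalgebras of $B$ and the relevant morphisms $g$ are the inclusions, the description of $\ker\varphi$ recalled before the definition of the relative Lipschitz saturation applies verbatim in both cases and yields
\begin{align*}
\ker\varphi_A=\Delta(A)\,\bigl(\tA{B}{B}\bigr),\qquad \ker\varphi_C=\Delta(C)\,\bigl(\tA{B}{B}\bigr),
\end{align*}
that is, $\ker\varphi_A$ and $\ker\varphi_C$ are the ideals of $\tA{B}{B}$ generated by $\Delta(A)$ and by $\Delta(C)$ respectively.

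Next I would observe that $A\subseteq C$ forces $\Delta(A)\subseteq\Delta(C)$, hence the ideals they generate satisfy $\ker\varphi_A\subseteq\ker\varphi_C$ inside the common ring $\tA{B}{B}$. The single fact actually used is that the integral closure of ideals is monotone: if $I\subseteq J$ are ideals of the same ring, then $\overline{I}\subseteq\overline{J}$, since an equation of integral dependence of an element over $I$ is in particular an equation of integral dependence over $J$. Applying this to $I=\ker\varphi_A$ and $J=\ker\varphi_C$ gives $\overline{\ker\varphi_A}\subseteq\overline{\ker\varphi_C}$.

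Finally, for $x\in A^*_{B,R}$ we have $\Delta(x)\in\overline{\ker\varphi_A}\subseteq\overline{\ker\varphi_C}$, which is exactly the membership condition defining $x\in C^*_{B,R}$; hence $A^*_{B,R}\subseteq C^*_{B,R}$. I do not expect any genuine obstacle here: the only point to keep in mind is that the ambient ring $\tA{B}{B}$ is literally the same for $A$ and for $C$, which is precisely what makes the comparison of the two integral closures legitimate.
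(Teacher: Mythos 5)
Your proof is correct and is the standard argument for this fact; the paper itself does not reprove it but simply cites Lipman, and your reduction to monotonicity of integral closure of ideals inside the common ring $\tA{B}{B}$ is exactly the right idea. The only thing worth flagging is that the same conclusion also falls out of the later Proposition~\ref{202405091748} by taking $R'=R$, $B'=B$, $f=\mathrm{id}_B$, and $f_A\colon A\hookrightarrow C$, so the monotonicity statement is really a special case of the functoriality of $(-)^*_{B,R}$; but your direct argument is more self-contained and does not depend on that later result.
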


	\begin{corollary}[\cite{L}]
		If $A$ is an $R$-subalgebra of $B$ then: 
		
		\begin{enumerate}
			\item $A \subseteq A^*_{B,R}$.
			
			\item $(A^*_{B,R})^*_{B,R} = A^*_{B,R}$.
		\end{enumerate}
	\end{corollary}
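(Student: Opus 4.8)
The plan is to reduce both statements to Proposition~\ref{prop_satsubanel} together with three standard facts about the integral closure of an ideal $I$ of a commutative ring: that $\overline{I}$ is again an ideal, that integral closure is monotone ($I\subseteq J\Rightarrow\overline{I}\subseteq\overline{J}$), and that it is idempotent ($\overline{\overline{I}}=\overline{I}$). Statement~(1) is then immediate: since $A$ is an $R$-subalgebra of $B$, the morphism $g$ is the inclusion, so $g(A)=A$, and Proposition~\ref{prop_satsubanel} asserts precisely that $A^*_{B,R}$ is an $R$-subalgebra of $B$ containing $g(A)=A$.

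For~(2) I would prove the two inclusions separately, writing $A':=A^*_{B,R}$. By Proposition~\ref{prop_satsubanel}, $A'$ is again an $R$-subalgebra of $B$, so statement~(1) applied with $A$ replaced by $A'$ gives $A'\subseteq (A')^*_{B,R}$, that is, $A^*_{B,R}\subseteq (A^*_{B,R})^*_{B,R}$. For the reverse inclusion, let $\varphi\colon B\ten_R B\to B\ten_A B$ and $\varphi'\colon B\ten_R B\to B\ten_{A'} B$ be the canonical $R$-algebra maps, so that $\ker\varphi=\Delta(A)(B\ten_R B)$ and $\ker\varphi'=\Delta(A')(B\ten_R B)$; observe that $\Delta$ depends only on $R$ and $B$, so the only thing distinguishing the definitions of $A^*_{B,R}$ and $(A')^*_{B,R}$ is which of the two closures $\Delta(x)$ is required to lie in. The key claim is $\overline{\ker\varphi'}\subseteq\overline{\ker\varphi}$. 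To prove it, note that every $a'\in A'$ satisfies $\Delta(a')\in\overline{\ker\varphi}$ by the very definition of $A'=A^*_{B,R}$; since $\overline{\ker\varphi}$ is an ideal of $B\ten_R B$, it must contain the ideal generated by $\{\Delta(a')\mid a'\in A'\}$, which is exactly $\ker\varphi'$. Thus $\ker\varphi'\subseteq\overline{\ker\varphi}$, and monotonicity together with idempotency of integral closure gives
\[
\overline{\ker\varphi'} \;\subseteq\; \overline{\,\overline{\ker\varphi}\,} \;=\; \overline{\ker\varphi}.
\]
Hence $x\in(A')^*_{B,R}$, i.e.\ $\Delta(x)\in\overline{\ker\varphi'}$, implies $\Delta(x)\in\overline{\ker\varphi}$, i.e.\ $x\in A^*_{B,R}$; this gives the reverse inclusion, and (2) follows.

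The argument is essentially formal, so I do not anticipate a real obstacle. The only point that needs care is to exploit that $\overline{\ker\varphi}$ is an \emph{ideal}, so that it absorbs all the generators $\Delta(a')$ of $\ker\varphi'$ at once, rather than treating it merely as the set of elements of the form $\Delta(a')$ with $a'\in A'$; once the standard properties of integral closures of ideals (notably idempotency) are in hand, the displayed inclusion, and with it the whole statement, follows.
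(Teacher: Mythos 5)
Your proof is correct. The paper itself does not reproduce a proof of this corollary (it simply cites Lipman~\cite{L}), but your argument is exactly the standard one: part~(1) is an immediate consequence of Proposition~\ref{prop_satsubanel}, and for part~(2) the nontrivial inclusion $(A^*_{B,R})^*_{B,R}\subseteq A^*_{B,R}$ is obtained by observing that $\ker\varphi'$ is the ideal generated by $\Delta(A')$, that each such generator lies in the ideal $\overline{\ker\varphi}$ by definition of $A'=A^*_{B,R}$, and then invoking monotonicity and idempotency of integral closure to conclude $\overline{\ker\varphi'}\subseteq\overline{\ker\varphi}$. Your point of care---that $\overline{\ker\varphi}$ must be used as an ideal, not merely as a set containing $\Delta(A')$---is precisely the right thing to flag, since one needs it to absorb the whole ideal $\ker\varphi'=\Delta(A')(B\ten_R B)$ and not just the generators. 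No gaps.
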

	
	\begin{proposition}[\cite{L}]\label{202405091748}
		Suppose that
		\[\begin{tikzcd}
			R & A & B \\
			{R'} & {A'} & {B'}
			\arrow["\tau", from=1-1, to=1-2]
			\arrow["g", from=1-2, to=1-3]
			\arrow["{\tau'}", from=2-1, to=2-2]
			\arrow["{g'}", from=2-2, to=2-3]
			\arrow["{f_R}"', from=1-1, to=2-1]
			\arrow["{f}", from=1-3, to=2-3]
			\arrow["{f_A}", from=1-2, to=2-2]
		\end{tikzcd}\]
		
		\noindent is a commutative diagram of algebra morphisms. Then
		$$f\prtt{A^*_{B,R}} \subseteq \prtt{A'}^*_{B',R'}.$$
	\end{proposition}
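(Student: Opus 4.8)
The plan is to reduce everything to the behaviour of the integral closure of an ideal under a single ring homomorphism, namely the one between the two ``doubled'' rings $B\otimes_R B$ and $B'\otimes_{R'}B'$ induced by the given diagram.

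First I would build a ring homomorphism
\[
F\colon B\otimes_R B \longrightarrow B'\otimes_{R'}B'.
\]
Regarding $B'$ as an $R$-algebra via $R\xrightarrow{f_R}R'\to B'$, the map $f\colon B\to B'$ becomes $R$-linear, so it induces $B\otimes_R B\to B'\otimes_R B'$ by $b_1\otimes b_2\mapsto f(b_1)\otimes f(b_2)$; composing with the canonical surjection $B'\otimes_R B'\twoheadrightarrow B'\otimes_{R'}B'$ yields $F$, a ring homomorphism with $F(b_1\otimes_R b_2)=f(b_1)\otimes_{R'}f(b_2)$. From the definitions of $\Delta,\Delta'$ and the commutativity of the diagram, two facts follow at once: (i) $F\circ\Delta=\Delta'\circ f$, because $F(b\otimes_R1-1\otimes_Rb)=f(b)\otimes_{R'}1-1\otimes_{R'}f(b)$; and (ii) $F(\ker\varphi)\subseteq\ker\varphi'$. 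For (ii), recall from the excerpt that $\ker\varphi$ is the ideal of $B\otimes_R B$ generated by the elements $\Delta(g(a))$ with $a\in A$; since $F(\Delta(g(a)))=\Delta'(f(g(a)))=\Delta'(g'(f_A(a)))\in\ker\varphi'$, the ring homomorphism $F$ carries the ideal these elements generate into the ideal $\ker\varphi'$.

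Next I would invoke the standard fact that integral dependence over an ideal is preserved by ring homomorphisms: if $\Phi\colon S\to S'$ is a ring homomorphism, $I\subseteq S$ an ideal and $z\in S$ is integral over $I$, then applying $\Phi$ to an equation $z^m+a_1z^{m-1}+\cdots+a_m=0$ with $a_i\in I^i$ produces an integral dependence equation for $\Phi(z)$ over the extended ideal $IS'$, since $\Phi(I^i)\subseteq(IS')^i$. Hence $\Phi(\overline{I})\subseteq\overline{IS'}$. Applying this to $F$, together with (ii) and the monotonicity of integral closure, gives $F(\overline{\ker\varphi})\subseteq\overline{F(\ker\varphi)(B'\otimes_{R'}B')}\subseteq\overline{\ker\varphi'}$.

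The conclusion is then immediate: if $x\in A^*_{B,R}$ then $\Delta(x)\in\overline{\ker\varphi}$, so by (i) and the displayed inclusion $\Delta'(f(x))=F(\Delta(x))\in\overline{\ker\varphi'}$, which is exactly the statement that $f(x)\in\prtt{A'}^*_{B',R'}$. I expect the only genuinely delicate point to be the bookkeeping in setting up $F$ — in particular checking that it is well defined in spite of the change of base ring from $R$ to $R'$ inside the tensor products, and that it really is a ring map and not merely $R$-linear; once that is in place the rest is formal.
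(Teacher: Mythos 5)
Your proof is correct. The paper states this proposition as a citation from Lipman \cite{L} without reproducing a proof, so there is no in-paper argument to compare against; but the argument you give — construct the ring homomorphism $F\colon B\otimes_R B\to B'\otimes_{R'}B'$ induced by the diagram, check $F\circ\Delta=\Delta'\circ f$ and $F(\ker\varphi)\subseteq\ker\varphi'$ via the generators $\Delta(g(a))$, and then invoke persistence of integral closure under ring homomorphisms (the same fact the paper cites as \cite[Remark 1.1.3(7)]{SH} elsewhere) — is precisely the standard functoriality argument and is what Lipman's proof amounts to. The one point you flag as delicate, well-definedness of $F$ across the base change from $R$ to $R'$, is handled correctly: the diagram's commutativity makes $f$ an $R$-algebra map when $B'$ carries the $R$-structure through $f_R$, so $f\otimes_R f$ is a ring homomorphism, and the further quotient $B'\otimes_R B'\twoheadrightarrow B'\otimes_{R'} B'$ is a ring homomorphism as well.
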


	\section{Monomials in the Lipschitz saturation of $R[t^{\gamma_1},\hdots,t^{\gamma_n}]$ in $R[t]$}
	
	We have already seen in the previous section that a key to understanding Lipschitz saturations is the study of $\overline{\ker\varphi}$. Through a sequence of technical lemmata, we obtain an explicit description thereof for the case of a monomial $R$-algebra, which only relies on the semigroup. Within this section, however, we only prove one inclusion and leave the other one to the following section. We start by a rather trivial, but useful observation:
	
	\begin{observation}\label{2024051222082}
		Let $R$ be a ring and $d,\alpha,s\in\bN$ with $s\geq 2$. If $a,b\in R$ then $$a^{\alpha+sd}-b^{\alpha+sd}\in\ideal{a^{\alpha+(s-2)d}-b^{\alpha+(s-2)d}, a^{\alpha+(s-1)d}-b^{\alpha+(s-1)d}}.$$
	\end{observation}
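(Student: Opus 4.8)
The plan is to reduce the claim to a single three-term recursion over $R$. For $k\in\bN$ write $c_k := a^{\alpha+kd}-b^{\alpha+kd}$, so that the assertion is precisely $c_s\in\ideal{c_{s-2},c_{s-1}}$. The key identity I would prove is
$$c_{k+1}=(a^d+b^d)\,c_k-a^db^d\,c_{k-1}\qquad\text{for all }k\geq 1.$$
Granting this, the proof is finished at once: since $s\geq 2$ we may take $k=s-1\geq 1$, and then $c_s=(a^d+b^d)c_{s-1}-a^db^dc_{s-2}$ exhibits $c_s$ as an explicit $R$-linear combination of $c_{s-2}$ and $c_{s-1}$, hence an element of $\ideal{c_{s-2},c_{s-1}}$.

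To establish the recursion I would just expand the right-hand side using distributivity. Writing $a^db^d\cdot a^{\alpha+(k-1)d}=b^d a^{\alpha+kd}$ and $a^db^d\cdot b^{\alpha+(k-1)d}=a^d b^{\alpha+kd}$, the terms $b^d a^{\alpha+kd}$ and $a^d b^{\alpha+kd}$ produced by $(a^d+b^d)c_k$ are cancelled exactly by those coming from $-a^db^dc_{k-1}$, and what survives is $a^{\alpha+(k+1)d}-b^{\alpha+(k+1)d}=c_{k+1}$. Conceptually this is just the statement that $a^d$ and $b^d$ are the two roots of $X^2-(a^d+b^d)X+a^db^d$, so each of the sequences $(a^{\alpha+kd})_k$ and $(b^{\alpha+kd})_k$, and therefore their difference, satisfies this linear recurrence.

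I do not expect any genuine obstacle: the whole argument is a distributive-law computation valid over an arbitrary commutative ring, with no cancellation, division, or integrality hypothesis needed. The only points requiring a word of care are that the recursion is invoked at index $k=s-1$, which is $\geq 1$ thanks to $s\geq 2$, and (if one wanted to be exhaustive) the degenerate cases $\alpha=0$ or $d=0$, which the same identity covers verbatim.
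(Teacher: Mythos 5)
Your proposal is correct and is essentially identical to the paper's proof: the recursion $c_{k+1}=(a^d+b^d)c_k-a^db^dc_{k-1}$ evaluated at $k=s-1$ is exactly the identity the paper derives by direct expansion. Your reformulation in terms of a three-term linear recurrence (with $a^d,b^d$ as roots of the associated characteristic polynomial) is a cleaner way to package the same computation, but there is no substantive difference in approach.
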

	
	\begin{proof}
		Direct calculation provides $$(a^d+b^d)(a^{\alpha+(s-1)d}-b^{\alpha+(s-1)d})=a^{\alpha+sd}-a^db^{\alpha+(s-1)d}+a^{\alpha+(s-1)d}b^d-b^{\alpha+sd}$$$$=(a^{\alpha+sd}-b^{\alpha+sd})+a^db^d(-b^{\alpha+(s-2)d}+a^{\alpha+(s-2)d}).$$ Hence $a^{\alpha+sd}-b^{\alpha+sd}=(-a^db^d)(a^{\alpha+(s-2)d}-b^{\alpha+(s-2)d})+(a^d+b^d)(a^{\alpha+(s-1)d}-b^{\alpha+(s-1)d}).$
	\end{proof}
	
	To make the following statements and proofs more readable, let us fix a useful shorthand notation: for each $x\in B$, denote $x_1:=x\ten_R 1_B$ and $x_2:=1_B\ten_Rx$. Thus, $\Delta(x)=x_1-x_2$ and for all $r\in\bN$ we have
	$$\Delta(x^r)=x^r\ten_R1_B-1_B\ten_Rx^r=(x\ten_R1_B)^r-(1_B\ten_Rx)^r=x_1^r-x_2^r.$$

	\begin{proposition}\label{2024051512221}
		Let $R\rightarrow A\overset{g}{\rightarrow}B$ be a composition of ring morphisms. If $x\in B$ and $\alpha,d\in\bN$ is such that $x^\alpha,x^{\alpha+d}\in A^*_{B,R}$ then $x^{\alpha+sd}\in A^*_{B,R}$, for all $s\in\bN$. In particular, if $x^\alpha,x^{\alpha+1}\in A^*_{B,R}$ then $x^m\in A^*_{B,R}$, for all $m\in\bN$ with $m\geq \alpha$.
	\end{proposition}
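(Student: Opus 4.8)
The plan is to exploit the fact that, by definition, $y\in A^*_{B,R}$ if and only if $\Delta(y)\in\overline{\ker\varphi}$, so that the statement is purely about which of the elements $\Delta(x^{\alpha+sd})\in B\ten_R B$ lie in $\overline{\ker\varphi}$. I would turn Observation \ref{2024051222082} into a two-step recursion for these elements and then induct on $s$.

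First I would observe that the identity obtained in the proof of Observation \ref{2024051222082} is a polynomial identity, hence holds in an arbitrary commutative ring; applying it in $B\ten_R B$ with $a$ replaced by $x_1=x\ten_R 1_B$ and $b$ replaced by $x_2=1_B\ten_R x$, and recalling the shorthand $\Delta(x^r)=x_1^r-x_2^r$, gives for every integer $s\geq 2$
$$\Delta(x^{\alpha+sd})=(-x_1^dx_2^d)\,\Delta(x^{\alpha+(s-2)d})+(x_1^d+x_2^d)\,\Delta(x^{\alpha+(s-1)d}).$$
Next I would invoke the standard fact that the integral closure $\overline{\ker\varphi}$ of the ideal $\ker\varphi$ is itself an ideal of $B\ten_R B$; in particular, $\overline{\ker\varphi}$ is closed under addition and absorbs multiplication by the coefficients $-x_1^dx_2^d$ and $x_1^d+x_2^d$.

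The proof is then a straightforward induction on $s$. For $s=0$ and $s=1$ the required statements $\Delta(x^\alpha)\in\overline{\ker\varphi}$ and $\Delta(x^{\alpha+d})\in\overline{\ker\varphi}$ are exactly the hypotheses $x^\alpha,x^{\alpha+d}\in A^*_{B,R}$. For $s\geq 2$, assuming $\Delta(x^{\alpha+(s-2)d})\in\overline{\ker\varphi}$ and $\Delta(x^{\alpha+(s-1)d})\in\overline{\ker\varphi}$, the displayed identity together with the ideal property of $\overline{\ker\varphi}$ yields $\Delta(x^{\alpha+sd})\in\overline{\ker\varphi}$, that is, $x^{\alpha+sd}\in A^*_{B,R}$. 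The final assertion is the special case $d=1$, since then $\{\alpha+s\mid s\in\bN\}=\{m\in\bN\mid m\geq\alpha\}$.

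I do not expect any serious obstacle: once one recognizes Observation \ref{2024051222082} as the engine, everything reduces to bookkeeping. The single point that needs care is that the recursion decreases $s$ by two, which is precisely why the hypothesis must provide \emph{two} consecutive powers $x^\alpha$ and $x^{\alpha+d}$ in the saturation rather than just one; correspondingly, the induction must be started with two base cases.
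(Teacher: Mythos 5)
Your proposal is correct and follows essentially the same route as the paper: apply Observation \ref{2024051222082} in $B\ten_R B$ with $a=x_1$, $b=x_2$, use that $\overline{\ker\varphi}$ is an ideal, and induct on $s$ with a two-step recursion. Your explicit handling of the two base cases is a slight improvement in bookkeeping over the paper's phrasing (which starts the strong induction at $r=1$ but then needs $r=0$ when $s=2$), but the underlying argument is identical.
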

	
	\begin{proof}
		We shall prove the claim by induction on $s$. The case $s=1$ is true by hypothesis. Suppose that $s>1$ and $x^{\alpha+rd}\in A^*_{B,R}$, for all $r\in\{1,\hdots,s-1\}$. In particular, $$x^{\alpha+(s-2)d},x^{\alpha+(s-1)d}\in A^*_{B,R},$$\noindent  which implies that     $x_1^{\alpha+(s-2)d}-x_2^{\alpha+(s-2)d},x_1^{\alpha+(s-1)d}-x_2^{\alpha+(s-1)d}\in\overline{\ker\varphi}$, and consequently
		$$\ideal{x_1^{\alpha+(s-2)d}-x_2^{\alpha+(s-2)d},x_1^{\alpha+(s-1)d}-x_2^{\alpha+(s-1)d}}\sub\overline{\ker\varphi}.$$
		By Observation \ref{2024051222082} we have $x_1^{\alpha+sd}-x_2^{\alpha+sd}\in\ideal{x_1^{\alpha+(s-2)d}-x_2^{\alpha+(s-2)d}, x_1^{\alpha+(s-1)d}-x_2^{\alpha+(s-1)d}}$, which implies that $x_1^{\alpha+sd}-x_2^{\alpha+sd}\in\overline{\ker\varphi}$, i.e, $x^{\alpha+sd}\in A^*_{B,R}$.
	\end{proof}
	
	\begin{proposition}\label{202406031700}
		Let $R\rightarrow A\overset{g}{\rightarrow}B$ be a composition of ring morphisms and $\bx_1,\hdots,\bx_s\in B$. Then, for all $i_1,\hdots,i_s\in\bZ_{\geq0}$, one has
		$$\Delta(\bx_1^{i_1}\cdots\bx_s^{i_s})\in\ideal{\Delta(\bx_1),\hdots,\Delta(\bx_s)}.$$	
		In particular, $\Delta(R[\bx_1,\hdots,\bx_s])\in \ideal{\Delta(\bx_1),\hdots,\Delta(\bx_s)}$.
	\end{proposition}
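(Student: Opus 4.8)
The plan is to reduce the statement to two facts about $\Delta$: that it obeys the Leibniz rule, and that on a single power $x^i$ it already lands in the principal ideal $\ideal{\Delta(x)}$ of $B\ten_RB$. Everything then follows by an induction on the number of factors, in complete analogy with the computation of Kähler differentials of a polynomial ring.

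First I would treat the one-variable case: for $x\in B$ and $i\in\bZ_{\geq0}$, using the shorthand $x_1=x\ten_R1_B$, $x_2=1_B\ten_Rx$, the telescoping identity gives
\[
\Delta(x^i)=x_1^i-x_2^i=(x_1-x_2)\sum_{j=0}^{i-1}x_1^{\,i-1-j}x_2^{\,j}=\Delta(x)\cdot\Big(\sum_{j=0}^{i-1}x_1^{\,i-1-j}x_2^{\,j}\Big).
\]
Since $x_1,x_2\in B\ten_RB$, the second factor lies in $B\ten_RB$, hence $\Delta(x^i)\in\ideal{\Delta(x)}$; for $i=0$ the left-hand side is $\Delta(1_B)=0$, which lies in every ideal.

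Next I would induct on $s$. The case $s=1$ is the previous paragraph. For $s>1$, set $y=\bx_2^{i_2}\cdots\bx_s^{i_s}$ and apply property (3) (Leibniz) of $\Delta$:
\[
\Delta(\bx_1^{i_1}y)=(\bx_1^{i_1}\ten_R1_B)\,\Delta(y)+(1_B\ten_Ry)\,\Delta(\bx_1^{i_1}).
\]
By the inductive hypothesis $\Delta(y)\in\ideal{\Delta(\bx_2),\hdots,\Delta(\bx_s)}$, and by the one-variable case $\Delta(\bx_1^{i_1})\in\ideal{\Delta(\bx_1)}$; since both are ideals of $B\ten_RB$, multiplying them by $\bx_1^{i_1}\ten_R1_B$ resp.\ $1_B\ten_Ry$ keeps us inside, and the sum lies in $\ideal{\Delta(\bx_1),\hdots,\Delta(\bx_s)}$. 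This gives the displayed containment. For the ``in particular'' clause, every element of $R[\bx_1,\hdots,\bx_s]$ is a finite $R$-linear combination of monomials $\bx_1^{i_1}\cdots\bx_s^{i_s}$, and since $\Delta$ is an $R$-module morphism (properties (1)--(2)) while $\ideal{\Delta(\bx_1),\hdots,\Delta(\bx_s)}$ is closed under $R$-linear combinations, the conclusion follows.

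I do not expect a genuine obstacle here: the only points needing a little care are recognising the auxiliary factor $\sum_j x_1^{\,i-1-j}x_2^{\,j}$ as a bona fide element of $B\ten_RB$ (so that we really land in the ideal, and not merely in $\Delta(B)$) and disposing of the degenerate exponents $i_k=0$. The substance of the argument is simply that $\Delta$ behaves like an $R$-linear derivation, so its values on a polynomial expression are governed by its values on the generators.
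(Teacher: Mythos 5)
Your proof is correct and follows essentially the same strategy as the paper: induction on the number of variables $s$, with the inductive step driven by the Leibniz rule for $\Delta$. The only difference is cosmetic and occurs in the base case $s=1$, where the paper performs a second induction on the exponent $i_1$ (again via Leibniz), whereas you write down the closed-form telescoping factorization $x_1^i-x_2^i=(x_1-x_2)\sum_{j=0}^{i-1}x_1^{i-1-j}x_2^{j}$ directly; both are valid, and yours is arguably a touch more explicit since it exhibits the cofactor.
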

	
	\begin{proof}
		For the case $s=1$, we proceed by induction on $i_1$, and using the Leibniz rule to get the equation $$\Delta(\bx_1^{i_1})=(\bx_1^{i_1-1}\ten_R 1)\Delta(\bx_1)+(1\ten_R\bx_1)\Delta(\bx_1^{i_1-1}),$$
		which completely solves this case.               Now suppose that $s>1$ and the result is true for $s-1$. Using Leibniz rule again, one has
		$$\Delta(\bx_1^{i_1}\cdots \bx_{s}^{i_{s}})=((\bx_1^{i_1}\cdots \bx_{s-1}^{i_{s-1}})\ten_R 1)\underbrace{\Delta(\bx_s^{i_s})}_{\in\ideal{\Delta(\bx_1),\hdots,\Delta(\bx_s)}}+(1\ten_R\bx_s^{i_s})\underbrace{\Delta(\bx_1^{i_1}\cdots \bx_{s-1}^{i_{s-1}})}_{\in\ideal{\Delta(\bx_1),\hdots,\Delta(\bx_s)}}.$$
	\end{proof}
	
	\begin{corollary}\label{2024050712061}
		Let $B$ be an $R$-algebra, $\bx_1,\hdots,\bx_n\in B$ and $A=R[\bx_1,\hdots,\bx_n]$. If $\varphi:B\ten_RB\rightarrow B\ten_AB$ is the canonical map and $\Delta:B\rightarrow B\ten_RB$ as before then $$\ker\varphi=\ideal{\Delta(\bx_1),\hdots,\Delta(\bx_n)}.$$
		
	\end{corollary}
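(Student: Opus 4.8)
The plan is to derive the statement by combining two facts already established in the excerpt: the explicit formula for $\ker\varphi$ recalled at the end of Section~1, and the generation statement of Proposition~\ref{202406031700}. Since $A$ is an $R$-subalgebra of $B$ and $g$ is the inclusion, that formula reads
$$\ker\varphi=\ideal{\{g(a)\ten_R1-1\ten_Rg(a)\mid a\in A\}}=\ideal{\Delta(A)},$$
i.e.\ $\ker\varphi$ is the ideal of $\tA{B}{B}$ generated by the image $\Delta(A)$.

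First I would check the inclusion $\ideal{\Delta(\bx_1),\hdots,\Delta(\bx_n)}\sub\ker\varphi$: each $\bx_i$ belongs to $A$, so $\Delta(\bx_i)\in\Delta(A)\sub\ker\varphi$, and since $\ker\varphi$ is an ideal it contains the ideal generated by $\Delta(\bx_1),\hdots,\Delta(\bx_n)$.

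For the reverse inclusion I would apply Proposition~\ref{202406031700} with $s=n$ and the given $\bx_i$: because $A=R[\bx_1,\hdots,\bx_n]$, it yields $\Delta(A)\sub\ideal{\Delta(\bx_1),\hdots,\Delta(\bx_n)}$. As $\ideal{\Delta(\bx_1),\hdots,\Delta(\bx_n)}$ is an ideal containing $\Delta(A)$, it must contain $\ideal{\Delta(A)}=\ker\varphi$. Together with the previous step this yields the claimed equality.

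I do not expect any genuine obstacle here: the corollary is a purely formal consequence of Proposition~\ref{202406031700} together with the presentation of $\ker\varphi$. The only points worth stating carefully are that $g(A)=A$ because $g$ is the inclusion, and that passing from $\Delta(A)\sub\ideal{\Delta(\bx_1),\hdots,\Delta(\bx_n)}$ to $\ker\varphi\sub\ideal{\Delta(\bx_1),\hdots,\Delta(\bx_n)}$ uses only that an ideal which contains a set contains the ideal that set generates.
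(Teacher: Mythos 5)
Your argument is the same as the paper's: invoke the identity $\ker\varphi=\Delta(A)(B\ten_RB)$ from Section~1 and then apply Proposition~\ref{202406031700} to $A=R[\bx_1,\hdots,\bx_n]$. You have simply spelled out the two inclusions that the paper leaves implicit, so the proposal is correct and takes essentially the same route.
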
 
	
	\begin{proof}

		We already know that $\ker\varphi=\Delta(g(A))(B\ten_RB)=\Delta(A)(B\ten_RB)$. Since $A=R[\bx_1,\hdots,\bx_n]$ then the result now is a consequence of Proposition \ref{202406031700}.
	\end{proof}	
	
	Using the above observations and results, we are now ready to state and prove one of the central tools of this article.
	
	\begin{proposition}\label{202407192004}
		Let $T$ be a noetherian ring, $m\in\bN$ with $m\geq 2$, $\alpha_1<\hdots<\alpha_m$ natural numbers, $d = \gcd(\alpha_1,\dots,\alpha_m)$ and $z_1,z_2\in  T$. Then  
		$$z_1^{\alpha_m+d}-z_2^{\alpha_m+d}\in\overline{\ideal{z_1^{\alpha_1}-z_2^{\alpha_1},\hdots,z_1^{\alpha_m}-z_2^{\alpha_m}}}.$$
		
		In particular, $z_1^{\alpha_m+sd}-z_2^{\alpha_m+sd}\in\overline{\ideal{z_1^{\alpha_1}-z_2^{\alpha_1},\hdots,z_1^{\alpha_m}-z_2^{\alpha_m}}}, \forall s\in\bN$.
	\end{proposition}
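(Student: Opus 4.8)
The plan is to verify membership in the integral closure by the valuative criterion, which is available since $T$ is noetherian: an element $x\in T$ lies in $\overline{\ideal{g_{1},\dots,g_{r}}}$ if and only if for every ring homomorphism $\phi\colon T\to\cV$ into a discrete valuation ring $\cV$, with valuation $\nu$ and uniformizer $\pi$, one has $\phi(x)\in\ideal{\phi(g_{1}),\dots,\phi(g_{r})}\cV$; and since $\cV$ is a PID, this last ideal equals $(\pi^{N})$ with $N=\min_{j}\nu(\phi(g_{j}))$. Writing $J=\ideal{z_{1}^{\alpha_{1}}-z_{2}^{\alpha_{1}},\dots,z_{1}^{\alpha_{m}}-z_{2}^{\alpha_{m}}}$, I therefore need to show that for every such $\phi$,
$$\nu\bigl(\phi(z_{1}^{\alpha_{m}+d}-z_{2}^{\alpha_{m}+d})\bigr)\ \geq\ \min_{1\leq i\leq m}\nu\bigl(\phi(z_{1}^{\alpha_{i}}-z_{2}^{\alpha_{i}})\bigr).$$

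First I would put $u=\phi(z_{1})$ and $v=\phi(z_{2})$; by the symmetry between $z_{1}$ and $z_{2}$ I may assume $\nu(u)\leq\nu(v)$, and if $u=0$ then $v=0$ and everything maps to $0$, so I assume $u\neq0$, set $a=\nu(u)\geq0$, and $\delta=v/u\in\cV$ (so $\nu(\delta)\geq0$). Then $z_{1}^{k}-z_{2}^{k}\mapsto u^{k}-v^{k}=u^{k}(1-\delta^{k})$, so $\nu(\phi(z_{1}^{k}-z_{2}^{k}))=ka+G(k)$ where $G(k):=\nu(\delta^{k}-1)\geq0$, and $G(k)=0$ whenever $\delta$ is a non-unit. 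Since $d=\gcd(\alpha_{1},\dots,\alpha_{m})$ I can write $\alpha_{i}=d\beta_{i}$ with $\gcd(\beta_{1},\dots,\beta_{m})=1$, and the factorization $\delta^{d\ell}-1=(\delta^{d}-1)\bigl(1+\delta^{d}+\cdots+\delta^{d(\ell-1)}\bigr)$ gives $G(\alpha_{i})\geq G(d)$ for every $i$ and $G(\alpha_{m}+d)\geq G(d)$. The heart of the argument is the claim that $\min_{i}G(\alpha_{i})=G(d)$, attained at some index $i_{0}$. Granting it, the desired inequality follows because $\alpha_{m}+d\geq\alpha_{i_{0}}$ and $a\geq0$:
$$\nu\bigl(\phi(z_{1}^{\alpha_{m}+d}-z_{2}^{\alpha_{m}+d})\bigr)=a(\alpha_{m}+d)+G(\alpha_{m}+d)\ \geq\ a\alpha_{i_{0}}+G(d)=a\alpha_{i_{0}}+G(\alpha_{i_{0}})=\nu\bigl(\phi(z_{1}^{\alpha_{i_{0}}}-z_{2}^{\alpha_{i_{0}}})\bigr).$$

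The hard part is the claim $\min_{i}G(\alpha_{i})=G(d)$. If $\delta$ is a non-unit it is clear ($G\equiv0$), and if $\delta^{d}=1$ it is clear ($G(\alpha_{i})=+\infty=G(d)$ for all $i$); so assume $\delta\in\cV^{\times}$ and set $\gamma:=\delta^{d}\neq1$. The key observation is that
$$S:=\{\,k\in\bZ:\nu(\gamma^{k}-1)>\nu(\gamma-1)\,\}$$
is a \emph{subgroup} of $\bZ$: it contains $0$; it is closed under negation because $\gamma^{-k}-1=-\gamma^{-k}(\gamma^{k}-1)$ and $\gamma$ is a unit; and it is closed under addition because $\gamma^{j+k}-1=\gamma^{j}(\gamma^{k}-1)+(\gamma^{j}-1)$. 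Moreover $1\notin S$, since $\nu(\gamma^{1}-1)=\nu(\gamma-1)$. Now $\nu(\gamma^{\beta_{i}}-1)=\nu(\delta^{\alpha_{i}}-1)=G(\alpha_{i})$ and $\nu(\gamma-1)=G(d)$, so if $G(\alpha_{i})>G(d)$ held for \emph{every} $i$, then every $\beta_{i}$ would lie in $S$, whence the subgroup $S$ would contain $\gcd(\beta_{1},\dots,\beta_{m})=1$, a contradiction. Hence $G(\alpha_{i_{0}})=G(d)$ for some $i_{0}$, which together with $G(\alpha_{i})\geq G(d)$ for all $i$ proves the claim.

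Finally, the "in particular" follows by induction on $s$: the cases $s=0$ (with $z_{1}^{\alpha_{m}}-z_{2}^{\alpha_{m}}\in J\subseteq\overline{J}$) and $s=1$ (just proved) are the base, and for $s\geq2$ Observation \ref{2024051222082}, applied with $a=z_{1}$, $b=z_{2}$ and $\alpha=\alpha_{m}$, places $z_{1}^{\alpha_{m}+sd}-z_{2}^{\alpha_{m}+sd}$ in the ideal generated by $z_{1}^{\alpha_{m}+(s-2)d}-z_{2}^{\alpha_{m}+(s-2)d}$ and $z_{1}^{\alpha_{m}+(s-1)d}-z_{2}^{\alpha_{m}+(s-1)d}$, both of which lie in $\overline{J}$ by induction; since $\overline{J}$ is an ideal, we are done. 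The only genuine obstacle in all of this is the cancellation phenomenon in $\nu(u^{k}-v^{k})$ when $\nu(u)=\nu(v)$, which is precisely what the subgroup argument controls; the remaining steps are bookkeeping with the valuative criterion.
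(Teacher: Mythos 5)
Your proof is correct and follows the same overall strategy as the paper: reduce via the valuative criterion to a discrete valuation ring $\cV$, split into the case where the images of $z_1,z_2$ have different valuations (your ``$\delta$ a non-unit'' case, the paper's $r\neq s$ case, both immediate) and the case of equal valuations, and in the latter exploit the gcd structure of the exponents. The one place where you genuinely diverge is in how the gcd step is justified: the paper proves $(1_V-u^d)t^{r\alpha_m}\in IV$ by ``mimicking the Euclidean algorithm'' on the exponents via the identity $(1_V-u^\ell)-u^{\ell-k}(1_V-u^k)=1_V-u^{\ell-k}$, and then notes this element divides $\psi(h)$; you instead work purely with valuations and isolate the clean lemma that $S=\{k\in\bZ:\nu(\gamma^k-1)>\nu(\gamma-1)\}$ is a subgroup of $\bZ$ not containing $1$, so not all $\beta_i=\alpha_i/d$ can lie in $S$. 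Both encode the same arithmetic fact; your subgroup formulation is arguably tidier and more self-contained than the paper's somewhat informal ``mimic the Euclidean algorithm,'' while the paper's phrasing stays closer to explicit ideal membership and divisibility. The handling of the ``in particular'' via Observation \ref{2024051222082} and induction on $s$ matches the paper exactly.
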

	
	\begin{proof}
		Let $h:=z_1^{\alpha_m+d}-z_2^{\alpha_m+d}$ and $I:=\ideal{z_1^{\alpha_1}-z_2^{\alpha_1},\hdots,z_1^{\alpha_m}-z_2^{\alpha_m}}$. By the Valuative Criterion \cite[Theorem 6.8.3]{SH} it suffices to prove that $\psi(h)\in IV:=\psi(I)V$ for all discrete valuation rings $V$ and all ring morphisms $\psi:T\rightarrow V$. Let $V$ be a discrete valuation ring with field of fractions $K$ and let $\psi:T\rightarrow V$ be a ring morphism. If $\psi(h)=0$ then $\psi(h)\in IV$. Now suppose $\psi(h)\neq 0$.
		
		Let $t\in V$ be a uniformizing parameter of $V$ and let $\ord_V$ be the discrete valuation map associated to $V$ on $K$. Since $x_1:=\psi(z_1), x_2:=\psi(z_2)\in V$ then we can write $x_1=u_1t^r$ and $x_2=u_2t^s$, with $u_1,u_2\in V^\times$, $\ord_V(x_1)=r$ and $\ord_V(x_2)=s$. Thus, $\ord_V(u_1)=\ord_V(u_2)=0$, $\psi(h)=x_1^{\alpha_m+d}-x_2^{\alpha_m+d}=u_1^{\alpha_m+d}t^{r(\alpha_m+d)}-u_2^{\alpha_m+d}t^{s(\alpha_m+d)}$ and $$IV=\ideal{u_1^{\alpha_1}t^{r\alpha_1}-u_2^{\alpha_1}t^{s\alpha_1},\hdots,u_1^{\alpha_m}t^{r\alpha_m}-u_2^{\alpha_m}t^{s\alpha_m}}.$$
		
		We first treat the case that $r\neq s$. Taking $v:=\min\{r,s\}$, one has $\alpha_j v=\ord_V(x_1^{\alpha_j}-x_2^{\alpha_j})$ for all $j\in\{1,\hdots,m\}$. Thus, $IV=\ideal{t^{\alpha_1v}}$ and since $\ord_V(\psi(h))=(\alpha_m+\gamma) v > \alpha_1v$, we see that $\psi(h)\in IV$. 
		
		In the remaining case $r=s$ we observe that $u_1,u_2\in V^\times$ allows us to rewrite $u_1^\ell-u_2^\ell=u_1^\ell(1_V-u^\ell)$ for all $\ell\in\bN.$ where $u:=\frac{u_2}{u_1}\in V^\times$. Then
		
		\begin{eqnarray*}
			IV & = & \ideal{u_1^{\alpha_1}(1_V - u^{\alpha_1})t^{r\alpha_1},\hdots,u_1^{\alpha_m}(1_V-u{\alpha_m})t^{r\alpha_m}} \cr
			& = & \ideal{(1_V - u^{\alpha_1})t^{r\alpha_1},\hdots,(1_V-u^{\alpha_m})t^{r\alpha_m}} \cr
			\psi(h) & = & u_1^{\alpha_m+d}(1_V-u^{\alpha_m+d})t^{r(\alpha_m+d)}
			= u_1^{\alpha_m+d}(1_V - u^d)\left( \sum_{i=0}^{\frac{\alpha_m}{d}} (u^d)^i \right) t^{r(\alpha_m+d)}
		\end{eqnarray*}
		
		Obviously, $(1_V - u^{\alpha_i})t^{r\alpha_m} \in IV$ for all $1 \leq i \leq m$. As $(1_V - u^\ell) - u^{\ell - k}(1_V - u^k) = 1_V - u^{\ell -k}$ for any $k, \ell  \in {\mathbb N}$, we may mimic the steps of the euclidean algorithm for the $\alpha_i$ in the exponents. This implies that $(1_V -u^{\gcd(\alpha_1,\hdots,\alpha_m)})t^{r\alpha_m} \in IV$, which in turn is clearly a divisor of $\psi(h)$.
		
		For the last statement, it suffices to use induction on $s$ as we have done in Proposition \ref{2024051512221}.\end{proof}
	
	We denote $R[t]$ as the polynomial ring in one indeterminate over $R$.
	
	\begin{corollary}\label{202408051848}
		Suppose that $R$ is a noetherian ring. Let $m\in\bN$ with $m\geq 2$, and let $0<\alpha_1<\cdots<\alpha_m$ be natural numbers with $d=\gcd(\alpha_1,\hdots,\alpha_m)$. Then, $$t^{\alpha_m+sd}\in R[t^{\alpha_1},\hdots, t^{\alpha_m}]^*_{R[t],R}, \forall s\in\bN.$$
	\end{corollary}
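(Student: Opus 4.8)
The plan is to deduce the statement from Proposition \ref{202407192004} by choosing the right noetherian ring. Set $B := R[t]$ and $A := R[t^{\alpha_1},\hdots,t^{\alpha_m}]$, let $\varphi\colon B\ten_R B\to B\ten_A B$ be the canonical map, and keep the shorthand $z_1 := t\ten_R 1_B$, $z_2 := 1_B\ten_R t$, so that $\Delta(t^k)=z_1^k-z_2^k$ for every $k\in\bN$. The point of the whole section is that $\overline{\ker\varphi}$ is exactly what controls membership in $A^*_{B,R}$, and the hard content of describing it has already been isolated.

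First I would pin down $\ker\varphi$. Since $A$ is the $R$-subalgebra of $B$ generated by $t^{\alpha_1},\hdots,t^{\alpha_m}$, Corollary \ref{2024050712061} gives
$$\ker\varphi=\ideal{\Delta(t^{\alpha_1}),\hdots,\Delta(t^{\alpha_m})}=\ideal{z_1^{\alpha_1}-z_2^{\alpha_1},\hdots,z_1^{\alpha_m}-z_2^{\alpha_m}}.$$
Next I would observe that $T:=B\ten_R B=R[t]\ten_R R[t]$ is the polynomial ring over $R$ in the two indeterminates $z_1,z_2$; since $R$ is noetherian, the Hilbert basis theorem makes $T$ noetherian.

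Now I would apply Proposition \ref{202407192004} to the noetherian ring $T$, the integer $d=\gcd(\alpha_1,\hdots,\alpha_m)$, and the elements $z_1,z_2\in T$ (whose hypotheses $m\geq 2$ and $\alpha_1<\cdots<\alpha_m$ are exactly those assumed here). Its ``in particular'' clause yields, for every $s\in\bN$,
$$z_1^{\alpha_m+sd}-z_2^{\alpha_m+sd}\in\overline{\ideal{z_1^{\alpha_1}-z_2^{\alpha_1},\hdots,z_1^{\alpha_m}-z_2^{\alpha_m}}}=\overline{\ker\varphi}.$$
Since the left-hand side is precisely $\Delta(t^{\alpha_m+sd})$, the definition of the relative Lipschitz saturation gives $t^{\alpha_m+sd}\in A^*_{B,R}=R[t^{\alpha_1},\hdots,t^{\alpha_m}]^*_{R[t],R}$, which is the claim.

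I do not expect a genuine obstacle: the substance lives in Proposition \ref{202407192004} (the valuative-criterion computation) and Corollary \ref{2024050712061} (identifying $\ker\varphi$ for a finitely generated $R$-algebra). The only things to be careful about are the bookkeeping identity $\Delta(t^k)=z_1^k-z_2^k$ and the elementary fact that $R[t]\ten_R R[t]$ is a two-variable polynomial ring over $R$, hence noetherian whenever $R$ is; the positivity hypothesis $\alpha_1>0$ plays no role in this reduction.
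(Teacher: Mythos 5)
Your proof is correct and follows exactly the same route as the paper's (one-line) proof: identify $\ker\varphi$ via Corollary \ref{2024050712061}, note $T:=B\ten_RB\cong R[t_1,t_2]$ is noetherian, and invoke Proposition \ref{202407192004}. Your write-up simply spells out the bookkeeping ($\Delta(t^k)=z_1^k-z_2^k$ and the Hilbert basis step) that the paper leaves implicit.
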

	
	\begin{proof}
		It is a consequence of Corollary \ref{2024050712061} by applying Proposition \ref{202407192004} on the noetherian ring $T:=B\ten_RB\cong R[t_1,t_2]$.
	\end{proof}
	
	\section{Lipschitz saturation of monomial subalgebras of $R[[t]]$}
	
	Let $n\in\bN$, $n\geq 2$ and let  $\gamma_1<\hdots<\gamma_n$ be coprime natural numbers, with $\gamma_i\nmid\gamma_j, \forall i,j\in\{1,\hdots,n\}$ with $i<j$.\footnote{For the purposes of this work, this last condition is not restrictive because we can always reduce to this case.} Denote $d_j:=\gcd(\gamma_1,\hdots,\gamma_j),\forall j\in\{1,\hdots,n\}$. We will see these partial common divisors will play a fundamental role in the description of the Lipschitz saturation in this context. 
	
	Let $B$ be an $R$-subalgebra of $R[[t]]$ and suppose that $A$ is an $R$-subalgebra of $R[[t^{\gamma_1},\hdots,t^{\gamma_n}]]\cap B$ such that: 
	
	\begin{itemize}
		\item $R[t]$ is a $R$-subalgebra of $B$;
		
		\item $R[ t^{\gamma_1},\hdots, t^{\gamma_n}]$ is a $R$-subalgebra of $A$.
	\end{itemize} 
	
	We have already seen that $A^*_{B,R}=\Delta^{-1}(\overline{\ker\varphi}),$ where $\Delta: B\rightarrow B\ten_RB$ is the diagonal morphism and $\varphi:B\ten_RB\rightarrow B\ten_AB$ is the canonical morphism. 
	
	For a field $k$, as usual, we denote by $k[[X_1,\dots,X_r]]$ the ${\mathfrak m}$-adic completion of $k[X_1,\dots,X_r]$, where ${\mathfrak m} = \langle X_1,\dots,X_r\rangle$ . The order of an element $f \in k[[X_1,\dots,X_r]] \setminus \{0\}$ is the least $\ell \in {\mathbb Z}_{\geq 0}$ such that the image of $f$ under the projection to $k[X_1,\dots,X_r]/{\mathfrak m}^{\ell +1} \cong k[[X_1,\dots,X_r]]/{\mathfrak m}^{\ell +1}$ does not vanish.
	
	Note that the flatness of $k$-algebras provides the following sequence of canonical inclusions for any $k$-subalgebra $C  \subseteq k[[t]]$:
	$$C \ten_k C \rightarrow k[[t]] \ten_k k[[t]] \rightarrow k[[t]] \widehat{\otimes}_k k[[t]] \cong k[[t_1,t_2]],$$
	where $\widehat{\otimes}_k$ is the completed tensor product. As a consequence we have the following observation, which we state for further references.
	
	\begin{observation}\label{202411261738}
		Let $C$ be a $k$-subalgebra of $k[[t]]$, let	$s\in\bN$ and $\bx_1,\hdots,\bx_s\in\ideal{t}\cap C$ and suppose that $A'$ is a $k$-subalgebra of $k[[\bx_1,\hdots,\bx_s]]\cap C$. If $\varphi':C\ten_kC\rightarrow C\ten_{A'}C$ is the canonical morphism then
		
		$$\ker\varphi'\sub \ideal{\Delta(\bx_1),\hdots,\Delta(\bx_s)}_{k[[t_1,t_2]]}.\footnote{For a ring $R$ and $a_1,\hdots,a_m\in R$, to emphasize in which ring we are working, we use the notation $\ideal{a_1,\hdots,a_m}_R$ for the ideal of $R$ generated by $a_1,\hdots,a_m$.}$$
	\end{observation}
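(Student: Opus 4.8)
The plan is to reduce the statement to Corollary \ref{2024050712061} (or rather its proof) applied inside the big power series ring $k[[t_1,t_2]]$. First I would observe that we have the commutative diagram of canonical $k$-algebra morphisms relating $C\ten_k C \to C\ten_{A'}C$ and $k[[t]]\widehat\otimes_k k[[t]] \to (k[[t]]\widehat\otimes_k k[[t]])/J$ for an appropriate ideal $J$; more concretely, since $A' \subseteq k[[\bx_1,\dots,\bx_s]]$, every element of $A'$ is a power series in the $\bx_i$, and the kernel of $\varphi'$ is generated (as an ideal of $C\ten_k C$) by $\{\Delta(a) : a \in A'\}$ by the description of $\ker\varphi$ recalled in Section 1 (namely $\ker\varphi' = \Delta(A')(C\ten_k C)$). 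So it suffices to show that for every $a \in A' \subseteq k[[\bx_1,\dots,\bx_s]]\cap C$, the image of $\Delta(a)$ in $k[[t_1,t_2]]$ lies in $\ideal{\Delta(\bx_1),\dots,\Delta(\bx_s)}_{k[[t_1,t_2]]}$.

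The key step is therefore a ``completed'' version of Proposition \ref{202406031700}: if $a = \sum_{I} c_I \bx^I$ is a formal power series in $\bx_1,\dots,\bx_s$ with $c_I \in k$, then $\Delta(a) = a_1 - a_2 \in \ideal{\Delta(\bx_1),\dots,\Delta(\bx_s)}$ in $k[[t_1,t_2]]$. For a \emph{polynomial} in the $\bx_i$ this is exactly Proposition \ref{202406031700} (applied with the ring $k[[t_1,t_2]]$, noting $\Delta$ is $k$-linear and satisfies the Leibniz rule there too, and the $\Delta(\bx_i)$ make sense as elements of $k[[t_1,t_2]]$ since $\bx_i \in C \subseteq k[[t]]$). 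To pass to power series I would argue by $\fm$-adic convergence: write $a = \lim_{N} p_N$ where $p_N$ is the truncation of $a$ to total degree $< N$ in the $\bx_i$. Each $\Delta(p_N) \in \ideal{\Delta(\bx_1),\dots,\Delta(\bx_s)}$ by the polynomial case; the difference $\Delta(a) - \Delta(p_N) = \Delta(a - p_N)$, and since $a - p_N \in \ideal{\bx_1,\dots,\bx_s}^N \subseteq \ideal{t}^{N'}$ (here using $\bx_i \in \ideal{t}$, so each $\bx_i$ has positive $t$-order and $\ideal{\bx_1,\dots,\bx_s}^N$ lands in a high power of $\ideal{t}$ with $N' \to \infty$), the Leibniz rule gives $\Delta(a - p_N) \in \ideal{t_1,t_2}^{N'} \cdot k[[t_1,t_2]]$. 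So $\Delta(a)$ is the $\ideal{t_1,t_2}$-adic limit of elements of the ideal $\ideal{\Delta(\bx_1),\dots,\Delta(\bx_s)}_{k[[t_1,t_2]]}$; since $k[[t_1,t_2]]$ is noetherian and complete, this ideal is closed in the $\ideal{t_1,t_2}$-adic topology, hence $\Delta(a)$ lies in it.

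Assembling these, for each generator $\Delta(a)$ of $\ker\varphi'$ (as an ideal of $C\ten_k C$), its image under $C\ten_k C \to k[[t_1,t_2]]$ lands in $\ideal{\Delta(\bx_1),\dots,\Delta(\bx_s)}_{k[[t_1,t_2]]}$; since $\ker\varphi'$ is generated by these elements over $C\ten_k C$ and the map $C\ten_k C \to k[[t_1,t_2]]$ is a ring morphism through which the factorization passes, the whole ideal $\ker\varphi'$ maps into $\ideal{\Delta(\bx_1),\dots,\Delta(\bx_s)}_{k[[t_1,t_2]]}$, which is the claimed inclusion (with the standing identification $C\ten_k C \hookrightarrow k[[t_1,t_2]]$ coming from the displayed chain of inclusions just before the statement). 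The main obstacle I anticipate is the justification of the limiting argument, i.e. making precise that $\Delta$ is continuous for the relevant adic topologies and that the finitely generated ideal $\ideal{\Delta(\bx_1),\dots,\Delta(\bx_s)}_{k[[t_1,t_2]]}$ is $\ideal{t_1,t_2}$-adically closed; both are standard (Krull intersection / Artin–Rees plus completeness), but they are the only non-formal ingredients, everything else being a direct transcription of Proposition \ref{202406031700} and the $\ker\varphi$ description from Section 1.
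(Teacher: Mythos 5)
Your argument is correct and is essentially the intended one: the paper states this as an ``observation'' following immediately from the displayed chain of inclusions $C\ten_k C \hookrightarrow k[[t_1,t_2]]$ together with the description $\ker\varphi' = \Delta(A')(C\ten_k C)$ from Section~1, leaving the reduction to Proposition~\ref{202406031700} and the passage from polynomials to power series implicit. You correctly identify and fill in exactly this gap: truncate $a\in A'\subseteq k[[\bx_1,\dots,\bx_s]]$ to a polynomial $p_N$ in the $\bx_i$, apply the Leibniz-rule induction of Proposition~\ref{202406031700} to get $\Delta(p_N)\in\ideal{\Delta(\bx_1),\dots,\Delta(\bx_s)}_{k[[t_1,t_2]]}$, and control the tail via $\bx_i\in\ideal{t}$ so that $\Delta(a)-\Delta(p_N)\in\ideal{t_1,t_2}^{N'}$ with $N'\to\infty$; the conclusion then follows because ideals in the noetherian local ring $k[[t_1,t_2]]$ are closed in the $\fm$-adic topology (Krull intersection in $k[[t_1,t_2]]/I$), so $\Delta(a)\in\bigcap_{N'}\bigl(I+\ideal{t_1,t_2}^{N'}\bigr)=I$. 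One small remark: completeness of $k[[t_1,t_2]]$ is not actually needed for the closedness step — noetherian local already suffices by Krull's intersection theorem applied to the quotient — though invoking completeness is of course harmless.
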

	
	We denote the numerical semigroup of ${\mathbb Z}_{\geq 0}$ generated by $\gamma_1,\dots,\gamma_n$ by $\Gamma$ and the set of gaps ${\mathbb Z}_{\geq 0} \setminus \Gamma$ by $G(\Gamma)$. As we require that $\gcd(\gamma_1,\hdots,\gamma_n) = 1$, the set $G(\Gamma)$ is finite. The maximal element of $G(\Gamma)$ will be referred to as $g(\Gamma)$ and the conductor of $\Gamma$ is $c(\Gamma) = g(\Gamma) +1$. 
	
	\begin{observation}\label{202405081704}
		Let $\ell\in\bZ_{\geq 0}$. Then: 
		$$t^{\ell}\in A\iff \ell \in \Gamma$$ 
		and, in particular, $t^{\ell} \in A, \forall \ell \geq c(\Gamma)$.
	\end{observation}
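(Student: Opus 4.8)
The plan is to prove the two implications of the equivalence separately and then read off the ``in particular'' part from the finiteness of $G(\Gamma)$. The technical heart is a clean description of $R[[t^{\gamma_1},\hdots,t^{\gamma_n}]]$ as a subring of $R[[t]]$: I claim it equals $\{\,\sum_k r_k t^k\in R[[t]]\mid r_k=0\text{ whenever }k\notin\Gamma\,\}$, the $R$-submodule of power series whose support is contained in $\Gamma$. One inclusion is immediate, since $R[t^{\gamma_1},\hdots,t^{\gamma_n}]$ consists of finite $R$-linear combinations of monomials $t^{\gamma_{i_1}}\cdots t^{\gamma_{i_j}}$ whose exponents lie in $\Gamma$ (as $\Gamma$ is closed under addition), and passing to the $t$-adic closure cannot enlarge the support; the opposite inclusion holds because any $\sum_{k\in\Gamma}r_k t^k$ is the $t$-adic limit of its truncations, each of which is such a finite combination. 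The support condition is stable under multiplication even when $R$ is not a domain: a nonzero coefficient of $t^k$ in a product forces a pair of exponents in $\Gamma$ summing to $k$, hence $k\in\Gamma$.

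Granting this, the forward implication is routine. If $\ell\in\Gamma$, write $\ell=c_1\gamma_1+\cdots+c_n\gamma_n$ with $c_i\in\bZ_{\geq 0}$, so that $t^\ell=(t^{\gamma_1})^{c_1}\cdots(t^{\gamma_n})^{c_n}\in R[t^{\gamma_1},\hdots,t^{\gamma_n}]\sub A$. Conversely, if $t^\ell\in A$ then $t^\ell\in R[[t^{\gamma_1},\hdots,t^{\gamma_n}]]$, since $A$ is by hypothesis an $R$-subalgebra of this ring; by the description above the support of $t^\ell$, which is exactly $\{\ell\}$, is contained in $\Gamma$, so $\ell\in\Gamma$.

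For the last assertion, recall that $\gcd(\gamma_1,\hdots,\gamma_n)=1$ forces $G(\Gamma)$ to be finite, so $g(\Gamma)=\max G(\Gamma)$ is well-defined and every $\ell\geq c(\Gamma)=g(\Gamma)+1$ lies outside $G(\Gamma)$, i.e.\ $\ell\in\Gamma$; the first implication then yields $t^\ell\in A$. I do not anticipate a real obstacle here, the argument being short; the only point that demands a little care is the identification of $R[[t^{\gamma_1},\hdots,t^{\gamma_n}]]$ with the power series supported on $\Gamma$ and, inside that, the stability of the support condition under the possibly infinite operations of $R[[t]]$ without any reducedness or domain hypothesis on $R$.
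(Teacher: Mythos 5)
Your proof is correct. The paper states this observation without proof, taking it as evident from the standing hypotheses $R[t^{\gamma_1},\hdots,t^{\gamma_n}]\subseteq A\subseteq R[[t^{\gamma_1},\hdots,t^{\gamma_n}]]\cap B$; your argument supplies the omitted details in the natural way, in particular the identification of $R[[t^{\gamma_1},\hdots,t^{\gamma_n}]]$ with the $\Gamma$-supported power series in $R[[t]]$ and the check that this set is closed under multiplication without any domain or reducedness hypothesis on $R$.
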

	
	For describing  $A^*_{B,R}$ explicitly, we introduce some shorthand notation:
	\begin{itemize}
		\item For each $Y\sub \bZ_{\geq 0}$ we denote $t^Y:=\{t^{\ell}\mid \ell\in Y\}$;
		
		\item $L_j:=\{\ell\in G(\Gamma)\mid \gamma_j<\ell<\gamma_{j+1}\mbox{ and }d_j\mid\ell\}$, $\forall j\in\{1,\hdots,n-1\}$\footnote{Since $d_1=\gamma_1$ then $L_1=\emptyset$.};
		
		\item $\tilde{L}(m):=\{\ell\in G(\Gamma)\mid \gamma_m+1\leq\ell\leq \gamma_m+\gamma_1-1\}$, $\forall m\in\{2,\hdots,n\}$;
		
		\item $L(m):=\left(\bigcup\limits_{j=1}^{m-1}L_j\right)\cup\tilde{L}(m)$.
	\end{itemize}
	
	From Corollary \ref{202408051848} we can directly deduce the following statement:
	
	\begin{proposition}\label{202408151302}
		
		Suppose that $R$ is a noetherian ring. If $r\in\{2,\hdots,n\}$ and $d_r=1$ then  $$A[t^{L(r)}]\sub A^*_{B,R}.$$
	\end{proposition}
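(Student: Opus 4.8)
The plan is to reduce the statement to a claim about individual monomials and then feed the right semigroup data into Corollary~\ref{202408051848}, transporting the resulting memberships from the ``model'' inclusion $R[t^{\gamma_1},\dots,t^{\gamma_k}]\hookrightarrow R[t]$ into the inclusion $A\hookrightarrow B$ by functoriality. By Proposition~\ref{prop_satsubanel}, $A^*_{B,R}$ is an $R$-subalgebra of $B$ containing $A$, and $A[t^{L(r)}]$ is generated over $A$ by the finite set $\{t^\ell : \ell\in L(r)\}$, each element of which lies in $R[t]\subseteq B$; hence it suffices to show $t^\ell\in A^*_{B,R}$ for every $\ell\in L(r)$.

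Next I would isolate the transport step: for each $k\in\{2,\dots,n\}$ one has $R[t^{\gamma_1},\dots,t^{\gamma_k}]^*_{R[t],R}\subseteq A^*_{B,R}$. This is an application of Proposition~\ref{202405091748} to the commutative diagram of algebra morphisms whose top row is $R\to R[t^{\gamma_1},\dots,t^{\gamma_k}]\hookrightarrow R[t]$, whose bottom row is $R\to A\hookrightarrow B$, and whose vertical maps are $\id_R$, the inclusion $R[t^{\gamma_1},\dots,t^{\gamma_k}]\hookrightarrow A$ (legitimate since $k\le n$ and $R[t^{\gamma_1},\dots,t^{\gamma_n}]\subseteq A$), and the inclusion $R[t]\hookrightarrow B$; commutativity is immediate because every map in sight is either a structure map or an inclusion inside $B$, so the proposition gives that the image of $R[t^{\gamma_1},\dots,t^{\gamma_k}]^*_{R[t],R}$ under $R[t]\hookrightarrow B$ is contained in $A^*_{B,R}$.

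Then I would split $L(r)=\bigl(\bigcup_{j=1}^{r-1}L_j\bigr)\cup\tilde L(r)$ and handle the two kinds of exponents. If $\ell\in L_j$ for some $j\in\{1,\dots,r-1\}$, then $j\ge 2$ because $L_1=\emptyset$; since $d_j\mid\gamma_j$ and $d_j\mid\ell$ with $\ell>\gamma_j$, we may write $\ell=\gamma_j+sd_j$ with $s\in\bN$, so Corollary~\ref{202408051848} applied to $\alpha_1=\gamma_1,\dots,\alpha_j=\gamma_j$ (for which $\gcd=d_j$) gives $t^\ell\in R[t^{\gamma_1},\dots,t^{\gamma_j}]^*_{R[t],R}$, and the transport step puts $t^\ell$ in $A^*_{B,R}$. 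If instead $\ell\in\tilde L(r)$, then $\gamma_r<\ell\le\gamma_r+\gamma_1-1$, so $\ell=\gamma_r+s$ with $s\in\bN$; since $d_r=1$ by hypothesis, Corollary~\ref{202408051848} applied to $\alpha_1=\gamma_1,\dots,\alpha_r=\gamma_r$ (for which $\gcd=1$) gives $t^\ell\in R[t^{\gamma_1},\dots,t^{\gamma_r}]^*_{R[t],R}$, and the transport step again yields $t^\ell\in A^*_{B,R}$. Combining the two cases gives $t^{L(r)}\subseteq A^*_{B,R}$, hence $A[t^{L(r)}]\subseteq A^*_{B,R}$.

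The only step that is not mere bookkeeping is the transport step: one must choose the commutative square carefully so that Proposition~\ref{202405091748} carries a membership in the concrete saturation $R[t^{\gamma_1},\dots,t^{\gamma_k}]^*_{R[t],R}$ over to the abstract $A^*_{B,R}$. Apart from that, the point to watch is the trivial-looking fact $L_1=\emptyset$, which is precisely what guarantees that the hypothesis $m\ge 2$ of Corollary~\ref{202408051848} holds when dealing with the exponents coming from the $L_j$; everything else is elementary manipulation of the numerical-semigroup data.
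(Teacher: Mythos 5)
Your proof is correct and follows essentially the same route as the paper, hinging on Corollary~\ref{202408051848} together with the transport step via Proposition~\ref{202405091748}, which the paper leaves implicit when writing the inclusion $R[t^{\gamma_1},\dots,t^{\gamma_k}]^*_{R[t],R}\subseteq A^*_{B,R}$. The only cosmetic difference is that for $\tilde L(r)$ the paper additionally invokes Proposition~\ref{2024051512221}, while you correctly observe that Corollary~\ref{202408051848} with $d_r=1$ already covers the entire range $\gamma_r+1,\dots,\gamma_r+\gamma_1-1$ directly; both are fine.
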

	
	\begin{proof}
		It suffices to show that $t^{L(r)}\sub A^*_{B,R}$. For $j\in\{1,\hdots,r-1\}$ and $\ell\in L_j$, we know that $d_j\mid \gamma_j$ and $d_j\mid \ell$, whence there exists $s\in\bN$ such that $\ell=\gamma_j+sd_j$. By Corollary \ref{202408051848} we have $$t^\ell\in R[t^{\gamma_1},\hdots, t^{\gamma_j}]^*_{R[t],R}\sub A^*_{B,R}.$$
		
		As, in particular, $d_r=1$, Corollary \ref{202408051848} also implies that $t^{\gamma_r+s}\in R[t^{\gamma_1},\hdots, t^{\gamma_r}]^*_{R[t],R}\sub A^*_{B,R}$ for all $s \in {\mathbb Z}_{\geq 0}$. Since $t^{\gamma_j}\in A\sub A^*_{B,R}$, Proposition \ref{2024051512221} ensures that $t^{\tilde{L}(r)}\sub A^*_{B,R}$.
	\end{proof}
	
	Is this already all of $A^*_{B,R}$ or are there still contributions missing? The following proposition and its corollaries provide the central argument why we have already reached the desired explicit description.
	
	For any $m\in\bN$ and any field $E$, fix the notation 
	$$\mu_E(m):=\{\zeta\in E\mid \zeta^m=1\}$$
	for the multiplicative group of the $m^{\mbox{\tiny{th}}}$-roots of the unity in $E$.
	
	\begin{proposition}\label{202408082116}
		Let $j\in\{1,\hdots,n-1\}$ and $f=\sum\limits_{i\in\bZ_{\geq0}} a_it^i\in A^*_{B,R}$, with $a_i\in R, \forall i\in\bZ_{\geq 0}$. Suppose that $\fp\in\spec R$ satisfies $\car\left(\quotient{R}{\fp}\right)\nmid d_j$. Then $$a_\ell\in\fp,\forall \ell\in\{1,\hdots,\gamma_{j+1}-1\}\mbox{ with  }d_j\nmid\ell.$$
	\end{proposition}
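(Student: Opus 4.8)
The plan is to reduce to the case where the base ring is a field and then to run the Valuative Criterion for integral closure against a single, carefully chosen discrete valuation ring built from a $d_j$-th root of unity. Note first that we may assume $d_j\ge 2$, since otherwise there is no $\ell$ with $d_j\nmid\ell$ and the statement is vacuous. For the reduction, set $k:=\Frac\bigl(\quotient{R}{\fp}\bigr)$ and let $\rho\colon R\to k$ be the canonical morphism, so that for $a\in R$ one has $\rho(a)=0$ if and only if $a\in\fp$. Reducing coefficients, $\rho$ induces $R[[t]]\to k[[t]]$, which restricts to $R[[t^{\gamma_1},\hdots,t^{\gamma_n}]]\to k[[t^{\gamma_1},\hdots,t^{\gamma_n}]]$; let $B'$ (resp.\ $A'$) be the $k$-subalgebra of $k[[t]]$ generated by the image of $B$ (resp.\ of $A$). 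Then $k[t]\sub B'\sub k[[t]]$ and $k[t^{\gamma_1},\hdots,t^{\gamma_n}]\sub A'\sub k[[t^{\gamma_1},\hdots,t^{\gamma_n}]]\cap B'$, so $A',B',k$ satisfy the standing hypotheses of this section. Applying Proposition \ref{202405091748} to the commutative diagram with rows $R\to A\to B$ and $k\to A'\to B'$ and vertical maps $\rho$ and the two coefficient reductions, the image $\bar f:=\sum_i\rho(a_i)t^i$ of $f$ lies in $(A')^*_{B',k}$. Hence it suffices to prove $\rho(a_\ell)=0$ for all $\ell\in\{1,\hdots,\gamma_{j+1}-1\}$ with $d_j\nmid\ell$.

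Next I would transport the membership into a two–variable power series ring. Since $\bar f\in(A')^*_{B',k}$, we have $\Delta(\bar f)\in\overline{\ker\varphi'}$ inside $B'\ten_kB'$, where $\varphi'\colon B'\ten_kB'\to B'\ten_{A'}B'$ is the canonical morphism. By Observation \ref{202411261738}, applied with $C=B'$ and $\bx_i=t^{\gamma_i}$, we get $\ker\varphi'\sub\ideal{t_1^{\gamma_1}-t_2^{\gamma_1},\hdots,t_1^{\gamma_n}-t_2^{\gamma_n}}_{k[[t_1,t_2]]}$ under the canonical inclusion $B'\ten_kB'\hookrightarrow k[[t_1,t_2]]$; since the image of $\overline{I}$ lies in $\overline{IS'}$ under any ring morphism $S\to S'$, viewing $\Delta(\bar f)$ inside $k[[t_1,t_2]]$ yields
$$\bar f(t_1)-\bar f(t_2)=\sum_i\rho(a_i)\bigl(t_1^i-t_2^i\bigr)\in\overline{\ideal{t_1^{\gamma_1}-t_2^{\gamma_1},\hdots,t_1^{\gamma_n}-t_2^{\gamma_n}}_{k[[t_1,t_2]]}}.$$

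I would then test this with one DVR. As $\car k=\car\bigl(\quotient{R}{\fp}\bigr)$ does not divide $d_j$, there is a primitive $d_j$-th root of unity $\zeta$ in some extension field $k'$ of $k$; put $V:=k'[[t]]$, a discrete valuation ring, and let $\psi\colon k[[t_1,t_2]]\to V$ be the $k$-algebra morphism given by $t_1\mapsto t$, $t_2\mapsto\zeta t$. For $1\le i\le j$ we have $d_j\mid\gamma_i$, hence $\zeta^{\gamma_i}=1$ and $\psi(t_1^{\gamma_i}-t_2^{\gamma_i})=0$; therefore $\psi\bigl(\ideal{t_1^{\gamma_1}-t_2^{\gamma_1},\hdots,t_1^{\gamma_n}-t_2^{\gamma_n}}\bigr)V=\ideal{t^N}_V$, where $N:=\min\{\gamma_i\mid i>j,\ d_j\nmid\gamma_i\}$. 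This set is nonempty: since $\gcd(\gamma_1,\hdots,\gamma_n)=1\ne d_j$, some $\gamma_i$ is not divisible by $d_j$, and every such index satisfies $i>j$ because $d_j\mid\gamma_i$ for $i\le j$; in particular $N\ge\gamma_{j+1}$. By the Valuative Criterion \cite[Theorem 6.8.3]{SH} applied to the displayed membership,
$$\sum_i\rho(a_i)(1-\zeta^i)t^i=\psi\bigl(\bar f(t_1)-\bar f(t_2)\bigr)\in\ideal{t^N}_V,$$
so $\rho(a_i)(1-\zeta^i)=0$ for every $i<N$; and when $d_j\nmid i$ we have $\zeta^i\ne1$, forcing $\rho(a_i)=0$. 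Since every $\ell\in\{1,\hdots,\gamma_{j+1}-1\}$ with $d_j\nmid\ell$ satisfies $\ell<\gamma_{j+1}\le N$, we conclude $\rho(a_\ell)=0$, i.e.\ $a_\ell\in\fp$.

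I expect the main obstacle to be the middle step: exporting the integral-closure condition out of the abstract tensor product $B'\ten_kB'$ into the Noetherian ring $k[[t_1,t_2]]$, for which Observation \ref{202411261738} together with the compatibility of integral closure of ideals with ring extension are the right tools; and then the choice of substitution $t_2\mapsto\zeta t$, which is the one point where the hypothesis $\car(\quotient{R}{\fp})\nmid d_j$ is used, engineered precisely so that the generators $t_1^{\gamma_i}-t_2^{\gamma_i}$ with $i\le j$ drop out and the pushed-forward ideal has order $N\ge\gamma_{j+1}$.
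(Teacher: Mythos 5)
Your proof is correct and follows essentially the same route as the paper: reduce coefficients modulo $\fp$ into a field, push $\ker\varphi'$ into the two-variable power series ring via Observation \ref{202411261738}, substitute $t_2\mapsto\zeta t$ with $\zeta$ a primitive $d_j$-th root of unity (here the hypothesis $\car(R/\fp)\nmid d_j$ enters), and read off the vanishing of coefficients from the order bound. The only cosmetic differences are that the paper passes directly to the algebraic closure $\overline{\kappa}$ rather than an explicit finite extension $k'$, and it is content with the bound $\ideal{t^{\gamma_{j+1}}}$ rather than pinning down the exact order $N$.
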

	
	\begin{proof}
		Let $\kappa$ be the field of fractions of the domain $\quotient{R}{\fp}$, and let $\overline{\kappa}$ be the algebraic closure of $\kappa$. We abreviate $A':=\overline{\kappa}[[t^{\gamma_1},\hdots, t^{\gamma_n}]]$ and $B':=\overline{\kappa}[[t]]$. Using the canonical surjection to a quotient and the inclusion of a domain in its field of fractions on the coefficients, we have a canonical commutative diagram 
		\[\begin{tikzcd}
			R & A & B \\
			{\overline{\kappa}} & {A'} & {B'}
			\arrow[hook, from=1-1, to=1-2]
			\arrow[from=1-1, to=2-1]
			\arrow[hook, from=1-2, to=1-3]
			\arrow[from=1-2, to=2-2]
			\arrow["\Psi", from=1-3, to=2-3]
			\arrow[hook, from=2-1, to=2-2]
			\arrow[hook, from=2-2, to=2-3]
		\end{tikzcd}\]
		and $\sum\limits_{i\in\bZ_{\geq0}}\overline{a_i}t^i=\Psi(f)\in (A')^*_{B',\overline{\kappa}}$ (where $\overline{x}=x+\fp,\forall x\in R$). 
		
		For $B'$ and $A'$, let $\Delta':B'\rightarrow B'\ten_{\overline{\kappa}}B'$ be the diagonal map, and $\varphi':B'\ten_{\overline{\kappa}}B'\rightarrow B'\ten_{A'}B'$ the canonical morphism. Then $\sum\limits_{i\in\bZ_{\geq0}}\overline{a_i}(t_1^i-t_2^i)=\Delta'(\Psi(f))\in\overline{\ker\varphi'}$. Further, in Observation \ref{202411261738} we saw that $$\ker\varphi'\sub\ideal{t_1^{\gamma_1}-t_2^{\gamma_1},\hdots,t_1^{\gamma_n}-t_2^{\gamma_n}}_{\overline{\kappa}[[t_1,t_2]]}.$$
		
		As $\mu_{\overline{\kappa}}(d_j)$ is known to be a cyclic subgroup of the multiplicative group $\overline{\kappa}^*$, there exists $\xi\in \overline{\kappa}$ such that $\mu_{\overline{\kappa}}(d_j)=\langle\xi\rangle$. Since $\car(\overline{\kappa})\nmid d_j$,  $X^{d_j}-1\in \overline{\kappa}[X]$ is separable. Because  $\overline{\kappa}$ is algebraically closed, $|\mu_{\overline{\kappa}}(d_j)|=d_j$. Besides, by definition we know that $d_j$ is a common divisor of $\gamma_1,\hdots,\gamma_j$, and this implies $\xi^{\gamma_s}=1,\forall s\in\{1,\hdots,j\}$. We use again the Valuative Criterion, which we already used in the proof of Proposition \ref{202407192004}, but this time with the knowledge that we are dealing with an element of the integral closure. To this end, we choose a particular ring morphism
		$$\begin{matrix}
			\Phi: & \overline{\kappa}[[t_1,t_2]] & \longrightarrow & \overline{\kappa}[[t]]\\
			&    F       & \longmapsto     & F(t,\xi t)
		\end{matrix}$$
		\noindent which takes $t_1\mapsto t$ and $t_2\mapsto \xi t$. The ideal generated by the image of $\ker\varphi'$ in $\overline{\kappa}[[t]]$ satisfies  
		\begin{eqnarray*}
			\Phi(\ker\varphi')\overline{\kappa}[[t]] & \sub & \ideal{\Phi(t_1^{\gamma_1}-t_2^{\gamma_1}), ,\hdots,\Phi(t_1^{\gamma_n}-t_2^{\gamma_n})} \cr
			& = &\ideal{\underbrace{(1-\xi^{\gamma_1})}_{=0}t^{\gamma_1},\hdots,\underbrace{(1-\xi^{\gamma_j})}_{=0}t^{\gamma_j}, (1-\xi^{\gamma_{j+1}})t^{\gamma_{j+1}},\hdots,(1-\xi^{\gamma_n})t^{\gamma_n}} \cr
			& \sub & \ideal{t^{\gamma_{j+1}}}.
		\end{eqnarray*}
		
		Since $\sum\limits_{i\in\bZ_{\geq0}}\overline{a_i}(t_1^i-t_2^i)\in\overline{\ker\varphi'}$, the persistence of the integral closure (see \cite[Remark 1.1.3 (7)]{SH}) implies that $$\Phi\left(\sum\limits_{i\in\bZ_{\geq0}}\overline{a_i}(t_1^i-t_2^i)\right)\in \overline{\Phi(\ker\varphi')\overline{\kappa}[[t]]}\sub\overline{\ideal{t^{\gamma_{j+1}}}}=\ideal{t^{\gamma_{j+1}}}.$$
		
		\noindent Thus, $\sum\limits_{i\geq 1}\overline{a_i}(1-\xi^i)t^i\in \ideal{t^{\gamma_{j+1}}}$, and consequently $\overline{a_i}(1-\xi^i)=0_\kappa, \forall i\in\{1,\hdots,\gamma_{j+1}-1\}$. 
		
		If $a_i\notin\fp$ , i.e. $\overline{a_i}\neq 0_\kappa$, the above equation implies that $\xi^i=1$. Thus, $|\langle \xi\rangle|$ divides $i$ by Lagrange's Subgroup Theorem, i.e., $d_j\mid i$. Therefore, the contrapositive says that $$a_i\in\fp, \forall i\in\{1,\hdots,\gamma_{j+1}-1\} \mbox{ such that } d_j\nmid i.$$
	\end{proof}
	
	\begin{corollary}\label{202405091127}
		
		Suppose that $R$ is a domain, let $j\in\{1,\hdots,n-1\}$ and $f=\sum\limits_{i\in\bZ_{\geq0}} a_it^i\in A^*_{B,R}$, with $a_i\in R, \forall i\in\bZ_{\geq 0}$. Suppose that $\car(R)\nmid d_j$. If $\ell\in\{1,\hdots,\gamma_{j+1}-1\}$ and $d_j\nmid\ell$ then $a_{\ell}=0$.
	\end{corollary}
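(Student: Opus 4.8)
The plan is to recognize this corollary as the special case of Proposition \ref{202408082116} obtained by taking $\fp$ to be the zero ideal. Concretely, first I would observe that since $R$ is a domain, $\fp := \langle 0\rangle\in\spec R$. Then the quotient $\quotient{R}{\fp}$ is canonically isomorphic to $R$ itself, so $\car\left(\quotient{R}{\fp}\right)=\car(R)$, and the standing hypothesis $\car(R)\nmid d_j$ becomes precisely the hypothesis $\car\left(\quotient{R}{\fp}\right)\nmid d_j$ required by Proposition \ref{202408082116}.

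Next I would apply Proposition \ref{202408082116} directly to $f=\sum_{i\in\bZ_{\geq 0}}a_it^i\in A^*_{B,R}$ with this choice of $\fp$. Its conclusion states that $a_\ell\in\fp$ for every $\ell\in\{1,\hdots,\gamma_{j+1}-1\}$ with $d_j\nmid\ell$. Since $\fp=\langle 0\rangle=\{0\}$, this is exactly the assertion that $a_\ell=0$ for all such $\ell$, which is what we must prove.

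There is essentially no obstacle here: the corollary is a verbatim specialization of the proposition to the integral-domain setting, where the zero ideal is available as a prime and the reduction mod $\fp$ is the identity. The only point worth a sentence is spelling out that passing to $\quotient{R}{\fp}$ does nothing when $\fp=\langle 0\rangle$, so the characteristic condition is preserved unchanged, and hence no additional hypotheses are needed beyond those already assumed.
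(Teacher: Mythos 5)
Your proof is correct and is exactly the paper's argument: the paper's entire proof is ``Take $\fp := \langle 0_R\rangle$,'' which matches your specialization of Proposition~\ref{202408082116} to the zero ideal in the domain setting.
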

	
	\begin{proof}
		Take $\fp:=\ideal{0_R}$.
	\end{proof}

	\begin{corollary}\label{202408082140}
		Suppose that $R$ is a reduced ring, let $j\in\{1,\hdots,n-1\}$ and $f=\sum\limits_{i\in\bZ_{\geq0}} a_it^i\in A^*_{B,R}$, with $a_i\in R, \forall i\in\bZ_{\geq 0}$. Suppose that $\car\left(\quotient{R}{\fp}\right)\nmid d_j, \forall \fp\in\spec R$. If $\ell\in\{1,\hdots,\gamma_{j+1}-1\}$ and $d_j\nmid\ell$ then $a_{\ell}=0$.
	\end{corollary}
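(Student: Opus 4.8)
The plan is to obtain this as an immediate consequence of Proposition \ref{202408082116}, letting the prime range over all of $\spec R$ and then using that a reduced ring has trivial nilradical. Fix $\ell\in\{1,\dots,\gamma_{j+1}-1\}$ with $d_j\nmid\ell$, and let $\fp\in\spec R$ be arbitrary. By hypothesis $\car\prtt{\quotient{R}{\fp}}\nmid d_j$, so the hypotheses of Proposition \ref{202408082116} are satisfied for this particular $\fp$; applying it gives $a_\ell\in\fp$.

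Since $\fp$ was an arbitrary prime, we conclude $a_\ell\in\bigcap_{\fp\in\spec R}\fp$, i.e. $a_\ell$ lies in the nilradical of $R$. As $R$ is reduced, its nilradical is $\ideal{0_R}$, and therefore $a_\ell=0$, which is exactly the claim.

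I do not expect a genuine obstacle here: the deduction is purely formal, mirroring the way Corollary \ref{202405091127} followed from Proposition \ref{202408082116} via the single choice $\fp=\ideal{0_R}$. The one point worth spelling out is that Proposition \ref{202408082116} imposes the divisibility condition on the characteristic of one residue field at a time, whereas the present corollary assumes it simultaneously for every $\fp\in\spec R$; it is exactly this uniformity that licenses running the earlier proposition against every prime and then intersecting to land in the nilradical.
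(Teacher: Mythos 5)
Your argument is correct and is essentially identical to the paper's proof: apply Proposition \ref{202408082116} for each $\fp\in\spec R$ to get $a_\ell\in\bigcap_{\fp\in\spec R}\fp=\sqrt{\ideal{0_R}}$, and conclude $a_\ell=0$ by reducedness. Nothing is missing.
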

	
	\begin{proof}
		Let $\ell\in\{1,\hdots,\gamma_{j+1}-1\}$ with $d_j\nmid\ell$. By Proposition \ref{202408082116} we have $$a_\ell\in\bigcap\limits_{\fp\in\mbox{\scriptsize{Spec}} R}\fp=\sqrt{\ideal{0_R}},$$
		
		\noindent and since $R$ is reduced then $a_{\ell}=0$.
	\end{proof}
	In the case that $R$ is reduced and artinian\footnote{It is known that reduced and artinian is equivalent to be a finite direct product of fields} we know that $\spec R$ is finite. Thus, the previous corollary is always applicable if we choose $\gamma_1$ not divisible by any of the numbers of the finite set $\left\{\car\left(\quotient{R}{\fp}\right)\mid \fp\in \spec R\right\}$.
	
	In Observation \ref{202405081704} we have seen that $t^i\in A, \forall i\in \Gamma$. For any $f\in B$, we can use $B\sub R[[t]]$ and write $f=\sum\limits_{i\in\bZ_{\geq 0}} a_it^i, a_i\in R$. We decompose $f$ into the following two sums:
	$$f = f_{\mid_{G(\Gamma)}} + (f - f_{\mid_{G(\Gamma)}}) 
	:=\sum\limits_{i\in G(\Gamma)}a_it^i + \sum\limits_{i \in \Gamma} a_it^i.$$
	
	\begin{definition}
		We say that $A$ is $(\Gamma, B)$-closed if 
		$$f-f_{\mid_{G(\Gamma)}}\in A$$
		for all $f\in B$.
	\end{definition}
	
	The main cases of interest in the context of Lipschitz saturation turn out to possess this property:
	
	\begin{proposition}\label{202411260046}
		\begin{enumerate}
			\item [a)] If $B=R[t]$ then $A$ is $(\Gamma, B)$-closed;
			
			\item [b)] If $B=R[[t]]$ and $A=R[[t^{\gamma_1},\hdots,t^{\gamma_n}]]$ then $A$ is $(\Gamma, B)$-closed;
			
			\item [c)] If $R\in\{\bR,\bC\}$, $B=R\{t\}$\footnote{$R\{t\}$ denotes the ring of analytic functions on $t$ over $R$.} and  $A=R\{t^{\gamma_1},\hdots,t^{\gamma_n}\}$ then $A$ is $(\Gamma, B)$-closed.
		\end{enumerate}
	\end{proposition}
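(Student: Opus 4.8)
\emph{Strategy.} I would prove all three parts at once. The key is to decompose the ``$\Gamma$-part'' of $f$, namely $f-f_{\mid_{G(\Gamma)}}=\sum_{i\in\Gamma}a_it^i$, along the residue classes modulo $\gamma_1$; this rewrites an a priori infinite object as a \emph{finite} $A$-linear combination of power series in the single monomial $t^{\gamma_1}$, after which each case becomes immediate. For the combinatorial input I would first note that $\gcd(\gamma_1,\dots,\gamma_n)=1$ makes $G(\Gamma)$ finite, hence $\Gamma$ is cofinite in $\bZ_{\geq0}$ and, for each $r\in\{0,\dots,\gamma_1-1\}$, the set $\Gamma_r:=\{i\in\Gamma\mid i\equiv r\pmod{\gamma_1}\}$ is nonempty; put $\beta_r:=\min\Gamma_r$. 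Since $\beta_r,\gamma_1\in\Gamma$ and $\Gamma$ is additively closed, $\beta_r+\gamma_1\bZ_{\geq0}\subseteq\Gamma_r$, while minimality of $\beta_r$ gives the reverse inclusion. Thus $\Gamma_r=\beta_r+\gamma_1\bZ_{\geq0}$, and $\Gamma$ is the disjoint union of these sets for $r=0,\dots,\gamma_1-1$ (the Apéry set of $\Gamma$ relative to $\gamma_1$).

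\emph{Reduction.} Writing $f=\sum_{i\geq0}a_it^i$ with $a_i\in R$ (using $B\subseteq R[[t]]$) and grouping the sum over $\Gamma$ by this partition, I get
$$f-f_{\mid_{G(\Gamma)}}=\sum_{r=0}^{\gamma_1-1}t^{\beta_r}\,g_r(t^{\gamma_1}),\qquad g_r(s):=\sum_{k\geq0}a_{\beta_r+k\gamma_1}\,s^k.$$
Because $\beta_r\in\Gamma$, Observation \ref{202405081704} gives $t^{\beta_r}\in R[t^{\gamma_1},\dots,t^{\gamma_n}]\subseteq A$, and since the sum over $r$ is finite, everything reduces to checking $g_r(t^{\gamma_1})\in A$ for each $r$.

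\emph{The three cases.} If $B=R[t]$ then $f$ is a polynomial, so each $g_r$ is a polynomial and $g_r(t^{\gamma_1})\in R[t^{\gamma_1}]\subseteq A$. If $B=R[[t]]$ and $A=R[[t^{\gamma_1},\dots,t^{\gamma_n}]]$, then $g_r$ is a genuine element of $R[[s]]$, so $g_r(t^{\gamma_1})\in R[[t^{\gamma_1}]]\subseteq R[[t^{\gamma_1},\dots,t^{\gamma_n}]]=A$. If $R\in\{\bR,\bC\}$, $B=R\{t\}$, $A=R\{t^{\gamma_1},\dots,t^{\gamma_n}\}$, then since $f$ is convergent there are $M,\rho>0$ with $|a_i|\leq M\rho^i$ for all $i$; hence $|a_{\beta_r+k\gamma_1}|\leq(M\rho^{\beta_r})(\rho^{\gamma_1})^k$, so $g_r$ converges on $|s|<\rho^{-\gamma_1}$, i.e. $g_r\in R\{s\}$, and therefore $g_r(t^{\gamma_1})\in R\{t^{\gamma_1}\}\subseteq R\{t^{\gamma_1},\dots,t^{\gamma_n}\}=A$. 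In all three cases $f-f_{\mid_{G(\Gamma)}}\in A$, which is the assertion.

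\emph{Where the work is.} There is essentially no obstacle of substance. The one thing that must be done correctly is the Apéry decomposition: without it one only sees that $f-f_{\mid_{G(\Gamma)}}$ is \emph{supported} in $\Gamma$, which is not literally the statement that it belongs to the subalgebra generated by the $t^{\gamma_j}$ --- and in the analytic case one would additionally have to prove convergence of the resulting $n$-variable series. Routing through the single monomial $t^{\gamma_1}$ dissolves both points, leaving only the trivial inclusions $R[t^{\gamma_1}]\subseteq A$, $R[[t^{\gamma_1}]]\subseteq A$, $R\{t^{\gamma_1}\}\subseteq A$ (valid in the respective settings) and the elementary geometric-series estimate in case (c).
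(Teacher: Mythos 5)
Your proof is correct, and it takes a genuinely different route from the paper. The paper handles (a) by finiteness of the support alone, and for (b) writes each $i\in\Gamma$ as $i=\sum_j m_{ij}\gamma_j$ and reinterprets $\sum_{i\in\Gamma}a_it^i$ as the image of the multivariate power series $\sum_{i\in\Gamma}a_i X_1^{m_{i1}}\cdots X_n^{m_{in}}\in R[[X_1,\dots,X_n]]$ under $X_j\mapsto t^{\gamma_j}$; for (c) it declares the argument ``completely analogous'' to (b). Your approach instead partitions $\Gamma$ into its Apéry classes modulo $\gamma_1$, writing $f-f_{\mid_{G(\Gamma)}}=\sum_{r=0}^{\gamma_1-1}t^{\beta_r}g_r(t^{\gamma_1})$ with finitely many one-variable series $g_r$, and reduces the whole question to $R[t^{\gamma_1}]\subseteq A$, $R[[t^{\gamma_1}]]\subseteq A$, or $R\{t^{\gamma_1}\}\subseteq A$ respectively. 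This buys you a uniform proof of all three parts and, more importantly, makes the analytic case (c) transparent: the geometric estimate $|a_{\beta_r+k\gamma_1}|\leq (M\rho^{\beta_r})(\rho^{\gamma_1})^k$ immediately gives convergence of each $g_r$, whereas the paper's multivariate lift in principle requires a separate (if routine) convergence argument that is left implicit under the phrase ``completely analogous.'' The paper's version is marginally shorter for (b) since it never invokes the Apéry structure, but your decomposition is the one that localizes all the work to a single variable and makes the three statements fall out of the same mechanism. Both arguments rest on the same elementary facts ($G(\Gamma)$ finite, $t^\ell\in A$ for $\ell\in\Gamma$ via Observation \ref{202405081704}), so there is no difference in the hypotheses used.
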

	
	\begin{proof}
		Let $f=\sum a_it^i\in B$. Thus, $f-f_{\mid_{G(\Gamma)}}=\sum\limits_{i\in\Gamma}a_it^i$.
		
		(a) Suppose that $B=R[t]$. In this case $f\in R[t]$, the set $D:=\{i\in\Gamma\mid a_i\neq 0\}$ is finite, and Lemma \ref{202405081704} ensures that $f-f_{\mid_{G(\Gamma)}}=\sum\limits_{i\in D}a_it^i\in A$.
		
		(b) Suppose that $A=R[[t^{\gamma_1},\hdots,t^{\gamma_n}]]$ and $B=R[[t]]$. For each $i\in\Gamma$ there exist $m_{ij}\in\bZ_{\geq 0}$ such that $i=\sum\limits_{j=1}^nm_{ij}\gamma_j$, and this implies $$f-f_{\mid_{G(\Gamma)}}=\sum\limits_{i\in\Gamma}a_it^i=\sum\limits_{i\in\Gamma}a_i(t^{\gamma_1})^{m_{i1}}\cdots(t^{\gamma_n})^{m_{in}}\in R[[t^{\gamma_1},\hdots,t^{\gamma_n}]]= A.$$
		
		(c) Completely analogous to the item (b).
	\end{proof}
	
	For concisely phrasing the cumbersome divisibility condition, we will use the following notation:

	\begin{definition}
		We say that the ring $R$ is $\gamma_1$-nice if one of the following conditions holds:
		
		\begin{enumerate}
			\item $R$ is a domain and $\car(R)\nmid \gamma_1$;
			
			\item $R$ is a reduced ring and $\car\left(\quotient{R}{\fp}\right)\nmid\gamma_1,\forall \fp\in\spec R$.
		\end{enumerate}
	\end{definition}

	\begin{remark}
		$\gamma_1$-nice rings are nothing special. The notion is introduced solely for convenience. Examples include:
		
		\begin{enumerate}
			\item Any domain $R$ (hence any field) with $\car R\nmid\gamma_1$
			
			\item Any reduced algebra $R$ over a field $k$, with $\car(k)\nmid \gamma_1$ and with injective canonical composition $k\rightarrow R\rightarrow \quotient{R}{\fp}$ for all $\fp\in\spec R$ (in particular, finitely generated polynomial rings and affine algebras over $k$).
			
			\item Any finite product of fields $R=k_1\times\cdots\times k_s$, where $\car(k_i)\nmid \gamma_1,\forall i\in\{1,\hdots,s\}$.  
		\end{enumerate}
	\end{remark}
	
	For each subset $S$ of $B$ we denote $R\cdot S$ and $A\cdot S$ as the $R$-submodule and $A$-submodule of $B$ generated by $S$, respectively.
	
	\begin{lemma}\label{202407191931}
		Let $m\in\{2,\hdots,n\}$ and suppose that $R$ is $\gamma_1$-nice ring. If $A$ is $(\Gamma, B)$-closed then $$A^*_{B,R}\sub A+R\cdot t^{\bigcup\limits_{j=1}^{m-1}L_j}+A\cdot t^{\tilde{L}(m)}.$$
	\end{lemma}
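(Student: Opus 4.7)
The plan is to pick an arbitrary $f\in A^*_{B,R}$ and, using $B\sub R[[t]]$, write it as a power series $f=\sum_{i\geq 0}a_it^i$ with $a_i\in R$. Since $A$ is $(\Gamma,B)$-closed, the tail $f-f_{\mid_{G(\Gamma)}}=\sum_{i\in\Gamma}a_it^i$ already lies in $A$, so I only need to decompose the finite polynomial
\[ f_{\mid_{G(\Gamma)}}=\sum_{\ell\in G(\Gamma)}a_\ell t^\ell \]
as an element of $R\cdot t^{\bigcup_{j=1}^{m-1}L_j}+A\cdot t^{\tilde{L}(m)}$. The natural split is at $\gamma_m$: write $f_{\mid_{G(\Gamma)}}=P_1+P_2$, where $P_1$ collects the terms with $\ell<\gamma_m$ and $P_2$ those with $\ell\geq\gamma_m$.

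For $P_1$ the key point is that $d_j\mid\gamma_1$ for every $j\in\{1,\hdots,n-1\}$ (since $d_j=\gcd(\gamma_1,\hdots,\gamma_j)$), so the $\gamma_1$-niceness of $R$ passes to $d_j$, making Corollary~\ref{202405091127} (in the domain case) or Corollary~\ref{202408082140} (in the reduced case) applicable for each such $j$. For $\ell\in G(\Gamma)$ with $0<\ell<\gamma_1$, the corollary with $j=1$ immediately gives $a_\ell=0$ because $d_1=\gamma_1\nmid\ell$. For $\ell\in G(\Gamma)$ with $\gamma_k<\ell<\gamma_{k+1}$ (some $k\in\{1,\hdots,m-1\}$), the corollary with $j=k$ forces $a_\ell=0$ unless $d_k\mid\ell$; in that case $\ell\in L_k$ by the very definition of $L_k$. (For $k=1$ the alternative is vacuous: $\gamma_1\mid\ell$ would place $\ell$ in $\Gamma$, consistent with $L_1=\emptyset$.) Hence $P_1\in R\cdot t^{\bigcup_{j=1}^{m-1}L_j}$.

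For $P_2$, I would argue directly via semigroup arithmetic that each monomial $t^\ell$ with $\ell\in G(\Gamma)$, $\ell\geq\gamma_m$, already lies in $A\cdot t^{\tilde{L}(m)}$. Writing $\ell-\gamma_m=k\gamma_1+r$ with $0\leq r<\gamma_1$ and $k\in\bZ_{\geq 0}$, the case $r=0$ would give $\ell=k\gamma_1+\gamma_m\in\Gamma$, contradicting $\ell\in G(\Gamma)$; hence $1\leq r\leq\gamma_1-1$. Likewise $\gamma_m+r\in\Gamma$ would force $\ell=k\gamma_1+(\gamma_m+r)\in\Gamma$, another contradiction, so $\gamma_m+r\in G(\Gamma)\cap[\gamma_m+1,\gamma_m+\gamma_1-1]=\tilde{L}(m)$. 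Since $(t^{\gamma_1})^k\in A$, this yields $t^\ell=(t^{\gamma_1})^k\cdot t^{\gamma_m+r}\in A\cdot t^{\tilde{L}(m)}$, and summing with the scalars $a_\ell\in R\sub A$ gives $P_2\in A\cdot t^{\tilde{L}(m)}$. Combining the three pieces delivers the desired inclusion.

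The main obstacle I foresee is the bookkeeping for $P_1$: matching each gap $\ell<\gamma_m$ with the correct index $k$ so the tightest instance of the relevant corollary can be invoked, and converting the $\gamma_1$-niceness of $R$ into the $d_j$-hypothesis those corollaries demand via $d_j\mid\gamma_1$. Once this divisibility bookkeeping is in place, $P_2$ reduces to a direct exercise with the numerical semigroup $\Gamma$, using only $R[t^{\gamma_1},\hdots,t^{\gamma_n}]\sub A$.
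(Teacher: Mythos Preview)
Your proposal is correct and follows essentially the same approach as the paper: reduce to $f_{\mid_{G(\Gamma)}}$ via $(\Gamma,B)$-closedness, split the gap positions at $\gamma_m$, use Corollaries~\ref{202405091127}/\ref{202408082140} (with the $d_j\mid\gamma_1$ observation) to force the coefficients below $\gamma_m$ into $\bigcup_{j}L_j$, and handle the gaps above $\gamma_m$ by semigroup arithmetic with $t^{\gamma_1}\in A$. Your treatment of $P_2$ via the explicit division $\ell-\gamma_m=k\gamma_1+r$ is in fact a crisper version of the paper's terse remark about a ``gapless sequence of $\gamma_1$ powers of $t$'', spelling out precisely why $\gamma_m+r\in\tilde{L}(m)$.
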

	
	\begin{proof}
		Let $f\in A^*_{B,R}$ and write $f=\sum\limits_{i\in\bZ_{\geq 0}} a_it^i, a_i\in R$. We can decompose 
		$$f=\sum\limits_{i\in\Gamma}a_it^i+\sum\limits_{i\in G(\Gamma)}a_it^i.$$
		Since $A$ is $(\Gamma, B)$-closed, $f-f_{\mid_{G(\Gamma)}}=\sum\limits_{i\in\Gamma}a_it^i\in A$. Thus, it suffices to show that $$f_{\mid_{G(\Gamma)}}=\sum\limits_{i\in G(\Gamma)}a_it^i\in R\cdot t^{\bigcup\limits_{j=1}^{m-1}L_j}+A\cdot t^{\tilde{L}(m)}.$$ We now decompose $f_{\mid_{G(\Gamma)}}$ even further:
		$$f_{\mid_{G(\Gamma)}} = f_0 + \dots + f_{m-1} + f_m$$
		

\noindent where $f_0 = \sum\limits_{\begin{smallmatrix}
		i \in G(\Gamma) \\
		0 \leq i < \gamma_1
\end{smallmatrix}} a_it^i$, $f_m = \sum\limits_{\begin{smallmatrix}
		i \in G(\Gamma) \\
		i>\gamma_m
\end{smallmatrix}} a_it^i$ and $f_s = \sum\limits_{\begin{smallmatrix}
		i \in G(\Gamma) \\
		\gamma_s \leq i < \gamma_{s+1}
\end{smallmatrix}} a_it^i$, for $ 1 \leq s \leq m-1$. Indeed, we know that $d_s \mid \gamma_1$ for any $1 \leq s < m$ and hence the preceding corollaries provide vanishing of certain terms in each $f_s$:

(1) If $R$ is a domain and $\car(R)\nmid \gamma_1$ then $\car(R)\nmid d_s$, thus Corollary \ref{202405091127} implies that $a_i=0$, $\forall i\in\{\gamma_s,\hdots,\gamma_{s+1}-1\}$ with $d_s\nmid i$. 

(2) If $R$ is a reduced ring and $\car\left(\quotient{R}{\fp}\right)\nmid\gamma_1,\forall \fp\in\spec R$ then $\car\left(\quotient{R}{\fp}\right)\nmid d_s,\forall \fp\in\spec R$ and  Corollary \ref{202408082140} ensures that $a_i=0$, $\forall i\in\{\gamma_s,\hdots,\gamma_{s+1}-1\}$ with $d_s\nmid i$. 

Hence $f_0 = 0$  and each of the other $f_s$ ($1\leq s\leq m-1)$ possesses non-zero coefficients only in the remaining gap-positions in the respective range:
$$f_s = \sum\limits_{\begin{smallmatrix}
		i\in G(\Gamma)\\
		\gamma_s<i<\gamma_{s+1}\\
		d_s\mid i
\end{smallmatrix}}a_it^i\in R\cdot t^{L_s}.$$

To see that $t^i \in A\cdot t^{\tilde{L}(m)}$, for all $i\in G(\Gamma)$ with $i>\gamma_m$, we only need to observe that $t^{\gamma_1},t^{\gamma_m} \in A$ and that $A\cdot  t^{\tilde{L}(m)}$ contains a gapless sequence of $\gamma_1$ powers of $t$ starting at $t^{\gamma_m}$.
\end{proof}

If we denote

\begin{itemize}
\item $\cL_j:=\{\ell\in L_j\mid \ell\leq \gamma_j+\gamma_1-1\}, \forall j\in\{1,\hdots,n-1\}$

\item $\cL(m):=\left(\bigcup\limits_{j=1}^{m-1}\cL_j\right)\cup\tilde{L}(m), \forall m\in\{2,\hdots,n\}$,
\end{itemize}

\noindent for a description of $A^*_{B,R}$ as an $A$-module (and $A$-algebra) we can drop further generators with the same argument as at the end of the last proof, obtaining the next lemma.

\begin{lemma}\label{202407131716}
Let $m\in\{2,\hdots,n\}$. Then, $t^{L_s}\sub A\cdot t^{\bigcup\limits_{j=1}^{m-1}\cL_j}+A\cdot t^{\tilde{L}(m)}, \forall s\in\{1,\hdots,m-1\}$. In particular, if $R$ is a $\gamma_1$-nice ring and $A$ is $(\Gamma, B)$-closed. $$A^*_{B,R}\sub A+A\cdot t^{\bigcup\limits_{j=1}^{m-1}\cL_j}+A\cdot t^{\tilde{L}(m)}\sub A[t^{\cL(m)}]$$
\end{lemma}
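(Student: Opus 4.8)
The heart of the matter is the first inclusion, which is purely a statement about the numerical semigroup $\Gamma$; once it is known, the displayed chain for $A^*_{B,R}$ follows formally from Lemma~\ref{202407191931}. The plan is to prove the sharper fact that $t^{L_s}\sub A\cdot t^{\cL_s}$ for every $s\in\{1,\hdots,m-1\}$, which is \emph{a fortiori} contained in $A\cdot t^{\bigcup\limits_{j=1}^{m-1}\cL_j}+A\cdot t^{\tilde{L}(m)}$ since $\cL_s\sub\bigcup\limits_{j=1}^{m-1}\cL_j$, and then to feed this into Lemma~\ref{202407191931} to get the statement on $A^*_{B,R}$.

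The single observation driving the argument is that $d_s=\gcd(\gamma_1,\hdots,\gamma_s)$ divides $\gamma_1$, so subtracting $\gamma_1$ from an exponent that is a multiple of $d_s$ yields another multiple of $d_s$. Concretely, I claim: if $\ell\in L_s$ and $\ell>\gamma_s+\gamma_1-1$, then $\ell-\gamma_1\in L_s$. To verify the three defining conditions of $L_s$: divisibility $d_s\mid(\ell-\gamma_1)$ holds because $d_s$ divides both $\ell$ and $\gamma_1$; the upper bound $\ell-\gamma_1<\gamma_{s+1}$ is immediate from $\ell<\gamma_{s+1}$; and for the lower bound and the gap condition I would use $\ell\in G(\Gamma)$ — from $\ell\geq\gamma_s+\gamma_1$ together with $\ell\neq\gamma_s+\gamma_1$ (equality is impossible because $\gamma_s+\gamma_1\in\Gamma$ while $\ell\in G(\Gamma)$) one gets $\ell-\gamma_1>\gamma_s$, and if $\ell-\gamma_1$ belonged to $\Gamma$ then $\ell=\gamma_1+(\ell-\gamma_1)\in\Gamma$, a contradiction, so $\ell-\gamma_1\in G(\Gamma)$.

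Granting this, fix $\ell\in L_s$ and iterate: while the current exponent exceeds $\gamma_s+\gamma_1-1$ it still belongs to $L_s$ and one may subtract $\gamma_1$; the resulting exponents strictly decrease while remaining in $L_s$ (hence $>\gamma_s$), so after finitely many steps, say $k$, one reaches $\ell'=\ell-k\gamma_1$ with $\gamma_s<\ell'\leq\gamma_s+\gamma_1-1$, i.e.\ $\ell'\in\cL_s$. Since $\gamma_1\in\Gamma$, Observation~\ref{202405081704} gives $t^{\gamma_1}\in A$, whence $t^{\ell}=(t^{\gamma_1})^{k}\,t^{\ell'}\in A\cdot t^{\cL_s}$. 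This establishes $t^{L_s}\sub A\cdot t^{\cL_s}$ and therefore the first assertion of the lemma.

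For the ``in particular'' part, Lemma~\ref{202407191931} yields, under the stated hypotheses, $A^*_{B,R}\sub A+R\cdot t^{\bigcup\limits_{j=1}^{m-1}L_j}+A\cdot t^{\tilde{L}(m)}$. As an $R$-submodule is contained in the $A$-submodule it generates, $R\cdot t^{\bigcup\limits_{j=1}^{m-1}L_j}\sub\sum\limits_{s=1}^{m-1}A\cdot t^{L_s}$, and by the first part each summand lies in $A\cdot t^{\bigcup\limits_{j=1}^{m-1}\cL_j}+A\cdot t^{\tilde{L}(m)}$; hence $A^*_{B,R}\sub A+A\cdot t^{\bigcup\limits_{j=1}^{m-1}\cL_j}+A\cdot t^{\tilde{L}(m)}$, which is visibly contained in the $A$-algebra $A[t^{\cL(m)}]$ because $\cL(m)=\bigl(\bigcup\limits_{j=1}^{m-1}\cL_j\bigr)\cup\tilde{L}(m)$. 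Nothing here is a real obstacle; the only point demanding care is the claim $\ell-\gamma_1\in L_s$ — specifically, using $\ell\in G(\Gamma)$ to promote $\ell\geq\gamma_s+\gamma_1$ to a strict inequality and to keep $\ell-\gamma_1$ out of $\Gamma$ — together with checking that the descent is well-founded; the rest is formal bookkeeping.
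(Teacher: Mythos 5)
Your proof is correct and is precisely the careful elaboration of the argument the paper only sketches (it merely says one can ``drop further generators with the same argument as at the end of the last proof,'' i.e.\ iterated multiplication by $t^{\gamma_1}\in A$). Your key observation --- that $\ell\in L_s$ with $\ell>\gamma_s+\gamma_1-1$ forces $\ell-\gamma_1\in L_s$, because $d_s\mid\gamma_1$ keeps the divisibility, $\gamma_1\in\Gamma$ keeps the gap property, and $\gamma_s+\gamma_1\in\Gamma$ rules out equality at the lower bound --- is exactly the well-founded descent the paper leaves implicit, and the remaining chain of inclusions follows formally from Lemma~\ref{202407191931} as you say.
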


\begin{corollary}
Suppose that $A$ is $(\Gamma, B)$-closed. If $R$ is a domain with $\car(R)\neq 2$ and $\gamma_1=2$  then $$A^*_{B,R}=A.$$ \noindent i.e., $A$ is Lipschitz saturated.
\end{corollary}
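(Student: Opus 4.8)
The plan is to read this off directly from Lemma \ref{202407191931} with the choice $m=2$, since the technical machinery needed is already in place. First I would verify the hypotheses of that lemma. As $R$ is a domain and $\car(R)\neq 2=\gamma_1$, we have $\car(R)\nmid\gamma_1$, so $R$ is $\gamma_1$-nice via the first condition of the definition; and $A$ is assumed $(\Gamma,B)$-closed. Since $n\geq 2$, taking $m=2$ is legitimate, and Lemma \ref{202407191931} then gives
$$A^*_{B,R}\sub A+R\cdot t^{L_1}+A\cdot t^{\tilde{L}(2)}.$$

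The heart of the argument is then to observe that both index sets on the right are empty, so this bound collapses to $A$. That $L_1=\emptyset$ is already noted in the footnote to the definition of the $L_j$: since $d_1=\gamma_1$, an integer $\ell$ with $\gamma_1<\ell$ and $d_1\mid\ell$ is a multiple of $\gamma_1$, hence lies in $\Gamma$ and cannot be a gap. For $\tilde L(2)$ I would invoke the standing hypothesis $\gamma_i\nmid\gamma_j$ for $i<j$: with $\gamma_1=2$ this forces $2\nmid\gamma_2$, so $\gamma_2$ is odd and $\gamma_2+1$ is even. Since $2=\gamma_1\in\Gamma$, every nonnegative even integer belongs to $\Gamma$; in particular $\gamma_2+1\in\Gamma$, i.e.\ $\gamma_2+1\notin G(\Gamma)$. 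But $\tilde L(2)=\{\ell\in G(\Gamma)\mid \gamma_2+1\leq\ell\leq\gamma_2+\gamma_1-1\}$ and $\gamma_2+\gamma_1-1=\gamma_2+1$, so $\tilde L(2)$ could only contain $\gamma_2+1$; hence it is empty.

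Putting this together, the displayed inclusion becomes $A^*_{B,R}\sub A$, and since $A\sub A^*_{B,R}$ by Proposition \ref{prop_satsubanel}, we conclude $A^*_{B,R}=A$, i.e.\ $A$ is Lipschitz saturated in $B$. I do not anticipate any genuine obstacle here: the only real observation is that $\gamma_1=2$ together with the non-divisibility hypothesis forces $\gamma_2$ to be odd, after which the relevant gap-sets vanish purely for parity reasons and the previously established lemmas supply everything else.
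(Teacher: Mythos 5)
Your proof is correct and follows essentially the same strategy as the paper: bound $A^*_{B,R}$ from above by $A$ plus terms indexed by the relevant gap-sets, then show those sets are empty. The only cosmetic differences are that you invoke Lemma~\ref{202407191931} (the $R$-module bound) directly rather than Lemma~\ref{202407131716} (the $A$-algebra bound derived from it), and you prove $\tilde{L}(2)=\emptyset$ by the parity observation that $\gamma_2+1$ is even, whereas the paper deduces it from the conductor formula $c(\langle 2,\gamma_2\rangle)=\gamma_2-1$; both routes are valid and of comparable length.
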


\begin{proof}
Since $\gamma_1\nmid\gamma_2$ then $2\nmid\gamma_2$ and $\gcd(2,\gamma_2)=1$, i.e., $d_2=1$. By Lemma \ref{202407131716} we have $A^*_{B,R}\sub A[t^{\cL(2)}]=A[t^{\tilde{L}(2)}]$. Observe that $c(\langle 2,\gamma_2\rangle)=\gamma_2-1$, and this implies that $\ell\in\langle 2,\gamma_2\rangle\sub \Gamma, \forall \ell\geq \gamma_2-1$. In particular, $\tilde{L}(2)=\emptyset$. Therefore, $A^*_{B,R}\sub A[t^\emptyset]=A$.
\end{proof}

Now, we gather all the results above in order to obtain the main theorem of this work, where we describe $A^*_{B,R}$ in their main natural structures.

\begin{theorem}\label{202407191955}
Let $r\in\{2,\hdots,n\}$ such that $d_r=1$. If $A$ is $(\Gamma, B)$-closed and $R$ is a noetherian $\gamma_1$-nice ring then \begin{enumerate}
	\item [a)] $A^*_{B,R}=A+R\cdot t^{\bigcup\limits_{j=1}^{r-1}L_j}+A\cdot t^{\tilde{L}(r)}$ ($R$-module description);
	
	\item [b)] $A^*_{B,R}=A+A\cdot t^{\bigcup\limits_{j=1}^{r-1}\cL_j}+A\cdot t^{\tilde{L}(r)}$ ($A$-module description);
	
	\item [c)] $A^*_{B,R}=A[t^{\cL(r)}]=A[t^{L(r)}]$ ($A$-algebra description).
\end{enumerate}
\end{theorem}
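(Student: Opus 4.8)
The strategy is to show that the three displayed equalities follow by combining the two inclusions already proven. The inclusion "$\supseteq$" for all three descriptions is essentially Proposition \ref{202408151302}: since $d_r=1$, that proposition gives $A[t^{L(r)}]\subseteq A^*_{B,R}$, and because $\cL(r)\subseteq L(r)$ we also get $A[t^{\cL(r)}]\subseteq A^*_{B,R}$; moreover the $R$-module $A+R\cdot t^{\bigcup_{j=1}^{r-1}L_j}+A\cdot t^{\tilde L(r)}$ and the $A$-module $A+A\cdot t^{\bigcup_{j=1}^{r-1}\cL_j}+A\cdot t^{\tilde L(r)}$ are both contained in $A[t^{L(r)}]$, hence in $A^*_{B,R}$. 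So the "$\supseteq$" direction of (a), (b), (c) is immediate once one checks these set-containments, which are purely formal.

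The inclusion "$\subseteq$" is where Lemma \ref{202407191931} and Lemma \ref{202407131716} do the work. Applying Lemma \ref{202407191931} with $m=r$ (legitimate since $r\in\{2,\dots,n\}$, $R$ is $\gamma_1$-nice, and $A$ is $(\Gamma,B)$-closed) yields $A^*_{B,R}\subseteq A+R\cdot t^{\bigcup_{j=1}^{r-1}L_j}+A\cdot t^{\tilde L(r)}$, which is exactly the right-hand side of (a); combined with the reverse inclusion from the previous paragraph this proves (a). For (b) and (c), Lemma \ref{202407131716} with $m=r$ gives $A^*_{B,R}\subseteq A+A\cdot t^{\bigcup_{j=1}^{r-1}\cL_j}+A\cdot t^{\tilde L(r)}\subseteq A[t^{\cL(r)}]$; together with $A[t^{\cL(r)}]\subseteq A[t^{L(r)}]\subseteq A^*_{B,R}$ (the last step being Proposition \ref{202408151302}, using $d_r=1$) we obtain the chain of equalities $A^*_{B,R}=A+A\cdot t^{\bigcup_{j=1}^{r-1}\cL_j}+A\cdot t^{\tilde L(r)}=A[t^{\cL(r)}]=A[t^{L(r)}]$, which is (b) and (c) simultaneously.

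The one point deserving slightly more care is the identity $A[t^{\cL(r)}]=A[t^{L(r)}]$ in (c): a priori $L(r)$ is larger than $\cL(r)$, so one must argue that the extra generators $t^{\ell}$ with $\ell\in L_j\setminus\cL_j$ (i.e.\ $\ell>\gamma_j+\gamma_1-1$) already lie in $A[t^{\cL(r)}]$. This is precisely the content of Lemma \ref{202407131716}'s first assertion, $t^{L_s}\subseteq A\cdot t^{\bigcup_{j=1}^{m-1}\cL_j}+A\cdot t^{\tilde L(m)}$, applied with $m=r$; so I would simply cite that. I do not expect a genuine obstacle here — essentially all the mathematical content has been front-loaded into the earlier lemmata, and the theorem is a bookkeeping assembly of those inclusions. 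The only thing to be vigilant about is that Proposition \ref{202408151302}, Lemma \ref{202407191931} and Lemma \ref{202407131716} all require their respective hypotheses ($R$ noetherian for the first, $R$ $\gamma_1$-nice and $A$ $(\Gamma,B)$-closed for the latter two), all of which are included among the hypotheses of the theorem, so the application is clean.
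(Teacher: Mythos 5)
Your proposal is correct and follows essentially the same route as the paper: the paper's proof is exactly the single chain of inclusions
\[
A^*_{B,R}\subseteq A+R\cdot t^{\bigcup_{j=1}^{r-1}L_j}+A\cdot t^{\tilde L(r)}\subseteq A+A\cdot t^{\bigcup_{j=1}^{r-1}\cL_j}+A\cdot t^{\tilde L(r)}\subseteq A[t^{\cL(r)}]\subseteq A[t^{L(r)}]\subseteq A^*_{B,R},
\]
citing Lemma~\ref{202407191931}, Lemma~\ref{202407131716}, and Proposition~\ref{202408151302}, which is precisely the assembly you describe. The only stylistic difference is that you treat $A[t^{\cL(r)}]=A[t^{L(r)}]$ as a point needing a separate appeal to the first assertion of Lemma~\ref{202407131716}; in fact this equality falls out automatically once the cyclic chain is closed, so that extra step, while not wrong, is unnecessary.
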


\begin{proof}

By \ref{202407191931}, \ref{202407131716} and \ref{202408151302} we have $$A^*_{B,R}\sub A+R\cdot t^{\bigcup\limits_{j=1}^{r-1}L_j}+A\cdot t^{\tilde{L}(r)}\sub A+A\cdot t^{\bigcup\limits_{j=1}^{r-1}\cL_j}+A\cdot t^{\tilde{L}(r)}\sub A[t^{\cL(r)}]\sub A[t^{L(r)}]\sub A^*_{B,R}.$$
\end{proof}

Notice that $d_n=1$. However, it is desirable to work with the smallest $r\in\{2,\hdots,n\}$ such that $d_r=1$ to avoid unecessary gaps to describe $A^*_{B,R}$.

\appendix

\section{Examples where $R$ is a noetherian domain}

In the following examples, we assume that $R$ is a noetherian domain with $\car(R)\nmid\gamma_1$ and $A$ is $(\Gamma,B)$-closed. Keep the cases considered in the Proposition \ref{202411260046} in mind, i.e., \begin{itemize}
\item  If $B=R[t]$;

\item If $B=R[[t]]$ and $A=R[[t^{\gamma_1},\hdots,t^{\gamma_n}]]$;

\item $B=R\{t\}$ and $A=R\{t^{\gamma_1},\hdots,t^{\gamma_n}\}$, for $R\in\{\bR,\bC\}$.
\end{itemize}

\begin{example}
Suppose that $\Gamma=\langle6, 25\rangle$. Since we need to assume that $\car(R)\nmid\gamma_1$ then we assume that $\car(R)\notin\{2,3\}$. Since $d_2=1$, we have:

\begin{itemize}
	\item $G(\Gamma)=\left\{\begin{matrix}
		1, 2, 3, 4, 5, 7, 8, 9, 10, 11, 13, 14, 15, 16, 17, 19, 20, 21, 22, 23, 26, 27, 28, \\
		29, 32, 33, 34, 35, 38, 39, 40, 41, 44, 45, 46, 47, 51, 52, 53, 57, 58, 59, 63, \\
		64, 65, 69, 70, 71, 76, 77, 82, 83, 88, 89, 94, 95, 101, 107, 113, 119
	\end{matrix}\right\}$;
	
	\item $\tilde{L}(2)=\{\ell\in G(\Gamma)\mid 26\leq \ell \leq 30\}=\{26, 27, 28, 29\}$;
	
	\item $\cL(2)=\cL_1\cup\tilde{L}(2)=\emptyset\cup\tilde{L}(2)=\{26,27,28,29\}$;
	
	\item $A^*_{B,R}=A[t^{\cL(2)}]$.
\end{itemize}

Thus:  $$A^*_{B,R}=A[t^{26}, t^{27},t^{28}, t^{29}].$$
\end{example}

\begin{example}
Suppose that $\Gamma=\langle 9, 12, 22\rangle$. In this case, we have $d_2=3$ and $d_3=1$. Assume that $\car(R)\neq 3$. We have:

\begin{itemize}
	\item $G(\Gamma)=\left\{\begin{matrix}
		1, 2, 3, 4, 5, 6, 7, 8, 10, 11, 13, 14, 15, 16, 17, 19, \\
		20, 23, 25, 26, 28, 29, 32, 35, 37, 38, 41, 47, 50, 59
	\end{matrix}\right\}$;
	
	\item $L_2=\{\ell\in G(\Gamma)\mid 12<\ell<22\mbox{ and }3\mid \ell\}=\{15\}$;
	
	\item $\cL_2=\{\ell\in L_2\mid \ell\leq 20\}=\{15\}$;
	
	\item $\tilde{L}(3)=\{\ell\in G(\Gamma)\mid 23\leq \ell\leq 30\}=\{23, 25, 26, 28, 29\}$;
	
	\item $\cL(3)=\cL_2\cup\tilde{L}(3)=\{15, 23, 25, 26, 28, 29\}$;
	
	\item $A^*_{B,R}=A[t^{\cL(3)}]$.
\end{itemize}

Hence,  $$A^*_{B,R}=A[t^{15}, t^{23}, t^{25}, t^{26}, t^{28}, t^{29} ].$$

\end{example}

\begin{example}
Suppose that $\Gamma=\langle 18, 24, 39, 55\rangle$. Then, $d_2=6, d_3=3$ and $d_4=1$. Assume that $\car(R)\notin\{2,3\}$. We have: 

\begin{itemize}
	\item $G(\Gamma)=\left\{\begin{matrix}
		1, 2, 3, 4, 5, 6, 7, 8, 9, 10, 11, 12, 13, 14, 15, 16, 17, 19, 20, 21, 22, 23, 25, 26, 27, \\
		28,	29, 30, 31, 32, 33, 34, 35, 37, 38, 40, 41, 43, 44, 45, 46, 47, 49, 50, 51, 52, 53, \\
		56, 58, 59, 61, 62, 64, 65, 67, 68, 69, 70, 71, 74, 76, 77, 80, 82, 83, 85, 86, 88, 89, \\
		92, 95, 98, 100, 101, 104, 106, 107, 113, 116, 119, 122, 124, 125, 131, 137, 140, \\
		143, 155, 161, 179
	\end{matrix}\right\}$;
	
	\item $L_2=\{\ell\in G(\Gamma)\mid 24<\ell<39\mbox{ and }6\mid \ell\}=\{30\}$;
	
	\item $\cL_2=\{\ell\in L_2\mid \ell\leq 41\}=\{30\}$;
	
	\item $L_3=\{\ell\in G(\Gamma)\mid 39<\ell<55\mbox{ and }3\mid \ell\}=\{45, 51\}$;
	
	\item $\cL_3=\{\ell\in L_3\mid \ell\leq 56\}=\{45, 51\}$;
	
	\item $\tilde{L}(4)=\{\ell\in G(\Gamma)\mid 56\leq \ell\leq 72\}=\{56, 58, 59, 61, 62, 64, 65, 67, 68, 69, 70, 71\}$;
	
	\item $\cL(4)=\{30, 45, 51, 56, 58, 59, 61, 62, 64, 65, 67, 68, 69, 70, 71\}$;
	
	\item $A^*_{B,R}=A[t^{\cL(4)}]$.
\end{itemize}

Therefore,  after calculating a minimal generator set of the semigroup generated by $\{18, 24\}\cup\cL(4)$ we conclude that $$A^*_{B,R}=A[t^{30}, t^{45}, t^{51}, t^{56}, t^{58}, t^{59}, t^{61}, t^{62}, t^{64}, t^{65}, t^{67}, t^{68}, t^{70}, t^{71}].$$

\end{example}

\begin{example}
Suppose that $\Gamma=\langle 40, 60, 70, 85, 103 \rangle$. Then, $d_2=20, d_3=10, d_4=5$ and $d_5=1$. Assume that $\car(R)\notin\{2,5\}$. We have:

\begin{itemize}
	\item $G(\Gamma)=\left\{\begin{matrix}
		1, 2, 3, 4, 5, 6, 7, 8, 9, 10, 11, 12, 13, 14, 15, 16, 17, 18, 19, 	20, 21, 22, 23, 24,	25, 26, 27, \\
		28, 29, 30, 31, 32, 33, 34, 35, 36, 37, 38, 39, 41, 42, 43, 44, 45, 46, 47, 48, 49, 50, 51, 52, \\
		53, 54, 55, 56, 57, 58, 59, 61, 62, 63, 64, 65, 66, 67, 68, 69, 71, 72, 73, 74, 75, 76, 77, 78, \\
		79, 81, 82, 83, 84, 86, 87, 88, 89, 90, 91, 92, 93, 94, 95, 96, 97, 98, 99, 101, 102, 104, 105,\\
		106, 107, 108, 109, 111, 112, 113, 114, 115, 116, 117, 118, 119, 121, 122, 123, 124, 126, \\
		127, 128, 129, 131, 132, 133, 134, 135, 136, 137, 138, 139, 141, 142, 144, 146, 147, 148, \\
		149, 151, 152, 153, 154, 156, 157, 158, 159, 161, 162, 164, 166, 167, 168, 169, 171, 172, \\
		174, 175, 176, 177, 178, 179, 181, 182, 184, 186, 187, 189, 191, 192, 193, 194, 196, 197, \\
		198, 199, 201, 202, 204, 207, 208, 209, 211, 212, 214, 216, 217, 218, 219, 221, 222, 224,\\
		226, 227, 229, 231, 232, 234, 236, 237, 238, 239, 241, 242, 244, 247, 249, 251, 252, 254, \\
		256, 257, 259, 261, 262, 264, 267, 269, 271, 272, 274, 277, 278, 279, 281, 282, 284, 287, \\
		289, 292, 294, 296, 297, 299, 301, 302, 304, 307, 311, 312, 314, 317, 319, 321, 322, 324, \\
		327, 329, 332, 334, 337, 339, 341, 342, 344, 347, 352, 354, 357, 359, 362, 364, 367, 372, \\
		374, 377, 381, 382, 384, 387, 392, 397, 399, 402, 404, 407, 414, 417, 422, 424, 427, 432, \\
		437, 442, 444, 447, 457, 462, 467, 477, 484, 487, 502, 507, 517, 527, 547, 587
	\end{matrix}\right\}$;
	
	\item $L_2=\{\ell\in G(\Gamma)\mid 60<\ell<70\mbox{ and }20\mid\ell\}=\emptyset$ and $\cL_2=\emptyset$;
	
	\item $L_3=\{\ell\in G(\Gamma)\mid 70<\ell<85\mbox{ and }10\mid \ell\}=\emptyset$ and $\cL_3=\emptyset$;
	
	\item $L_4=\{\ell\in G(\Gamma)\mid 85<\ell<103\mbox{ and }5\mid\ell\}=\{90, 95\}$;
	
	\item $\cL_4=\{\ell\in L_4\mid \ell\leq 124\}=\{90, 95\}$;
	
	\item $\tilde{L}(5)=\{\ell\in G(\Gamma)\mid 104\leq\ell\leq 142\}=\left\{\begin{matrix}
		104, 105, 106, 107, 108, 109, 111, 112, 113, \\
		114, 115, 116, 117, 118, 119, 121, 122, 123, \\
		124, 126, 127, 128, 129, 131, 132, 133, 134, \\
		135, 136, 137, 138, 139, 141, 142
	\end{matrix}\right\}$;
	
	\item $\cL(5)=\cL_4\cup\tilde{L}(5)=\left\{\begin{matrix}
		90, 95, 104, 105, 106, 107, 108, 109, 111, 112, 113, 114, 115, 116, \\
		117, 118, 119, 121, 122, 123, 124, 126, 127, 128, 129, 131, 132, \\
		133, 134, 135, 136, 137, 138, 139, 141, 142
	\end{matrix}\right\}$;
	
	\item $A^*_{B,R}=A[t^{\cL(5)}]$.
\end{itemize}

Therefore, after calculating a minimal generator set for the semigroup generated by $\{40, 60, 70, 85\}\cup\cL(5)$, we conclude that $$A^*_{B,R}=A\left[\begin{matrix}
	t^{90}, t^{95}, t^{104}, t^{105}, t^{106}, t^{107}, t^{108}, t^{109}, t^{111}, \\
	t^{112}, t^{113}, t^{114}, t^{115}, t^{116}, t^{117}, t^{118}, t^{119}, t^{121}, \\
	t^{122}, t^{123}, t^{124}, t^{126}, t^{127}, t^{128}, t^{129}, t^{131}, t^{132}, \\
	t^{133}, t^{134}, t^{136}, t^{137}, t^{138}, t^{139}, t^{141}, t^{142}
\end{matrix}\right].$$
\end{example}

\begin{example}
Suppose that $\Gamma=\langle 12, 18, 22, 29, 35, 49 \rangle$. Then, $d_2=6$, $d_3=2$ and $d_4=1$. Assume that $\car(R)\notin\{2,3\}$. We have:

\begin{itemize}
	\item $G(\Gamma)=\left\{\begin{matrix}
		1, 2, 3, 4, 5, 6, 7, 8, 9, 10, 11, 13, 14, 15, 16, 17, 19, 20, 21,\\ 
		23, 25, 26, 27, 28, 31, 32, 33, 37, 38, 39, 43, 45, 50, 55
	\end{matrix}\right\}$;
	
	\item $L_2=\{\ell\in G(\Gamma)\mid 18<\ell<22\mbox{ and }6\mid\ell\}=\emptyset$ and $\cL_2=\emptyset$;
	
	\item $L_3=\{\ell\in G(\Gamma)\mid 22<\ell<29\mbox{ and }2\mid\ell\}=\{26, 28\}$;
	
	\item $\cL_3=\{\ell\in L_3\mid \ell\leq 33\}=\{26, 28\}$;
	
	\item $\tilde{L}(4)=\{\ell\in G(\Gamma)\mid 30\leq\ell\leq 40\}=\{31, 32, 33, 37, 38, 39\}$;
	
	\item $\cL(4)=\cL_2\cup\cL_3\cup\tilde{L}(4)=\{26, 28, 31, 32, 33, 37, 38, 39\}$;
	
	\item $A^*_{B,R}=A[t^{\cL(4)}]$.
\end{itemize}

Thus, after calculating a minimal generator set for the semigroup generated by $\{12, 18, 22\}\cup\cL(4)$, we conclude that $$A^*_{B,R}=A[t^{26}, t^{28}, t^{31}, t^{32}, t^{33}, t^{37}, t^{39}].$$

\end{example}

\section*{Acknowledgements}

\hspace{0.47cm} Thiago da Silva is funded by CAPES grant number 88887.909401/2023-00 and CAPES grant number 88887.897201/2023-00. Anne Fr\"uhbis-Kr\"uger is partially supported by the German Research Foundation (DFG) through TRR
195, Project II.5. 

The authors are grateful to Prof. Antônio Carlos Telau (UFVJM-Brazil) for providing a program generating the gaps of numerical semigroups, which led to the observations initiating this work.

\vspace{0.5cm}

\textsc{Anne Frühbis-Krüger (Carl von Ossietzky Universität Oldenburg)}

anne.fruehbis-krueger@uni-oldenburg.de

\vspace{0.5cm}

\textsc{Thiago da Silva (Federal University of Espírito Santo)}

thiago.silva@ufes.br

\end{document}